\documentclass[12pt]{amsart}
\usepackage[margin=1in]{geometry}
\usepackage{graphicx} 
\usepackage{tikz}
\usetikzlibrary{shapes}
\usetikzlibrary{intersections}
\usetikzlibrary{svg.path}
\usetikzlibrary{decorations.markings}
\usetikzlibrary{hobby}
\usepackage{amsmath,mathrsfs}
\usepackage{physics}
\usepackage{amssymb,amsrefs}
\usepackage{bm}
\usepackage{dsfont}
\usepackage{hyperref}

\theoremstyle{plain} 
\newtheorem{theorem}{Theorem}[section]
\newtheorem*{theorem*}{Theorem}
\newtheorem{lemma}[theorem]{Lemma}
\newtheorem*{lemma*}{Lemma}

\newtheorem*{corollary*}{Corollary}
\newtheorem{proposition}[theorem]{Proposition}
\newtheorem*{proposition*}{Proposition}

\newtheorem{definition}[theorem]{Definition}
\newtheorem*{definition*}{Definition}

\newtheorem*{example*}{Exemple}

\newtheorem*{remark*}{Remark}
\newtheorem*{remarks*}{Remarks}

\newcommand{\C}{\mathbb{C}}

\newcommand{\R}{\mathbb{R}}

\newcommand{\beq}{\begin{equation}}
\newcommand{\eeq}{\end{equation}}

\renewcommand{\th}{\theta}
\newcommand{\g}{\gamma}
\newcommand{\G}{\Gamma}
\newcommand{\e}{\epsilon}
\newcommand{\N}{\mathcal{N}}
\newcommand{\sign}{\textrm{sgn}}

\newcommand{\E}[1]{\mathbb{E}\left[{#1}\right]}
\renewcommand{\tr}{\mathrm{tr}}
\renewcommand\dd{\mathop{}\!\mathrm{d}}
\newcommand{\T}[3]{T_{{#1},{#2}}\left(#3\right)}
\renewcommand{\l}{\lambda}
\renewcommand{\a}{\alpha}
\renewcommand{\b}{\beta}
\newcommand{\s}{\sigma}
\renewcommand{\t}{\tau}
\renewcommand{\bar}{\overline}
\newcommand{\Fp}[2]{F_{#1}^+(#2)}
\newcommand{\Fm}[2]{F_{#1}^-(#2)}
\DeclareRobustCommand{\sfir}{\genfrac[]{0pt}{}}
\DeclareRobustCommand{\ssec}{\genfrac\{\}{0pt}{}}

\usepackage{empheq}

\title[On a random matrix proof of a bipartite Harer-Zagier formula]{On a random matrix proof of\\ a bipartite Harer-Zagier formula}
\author{Guillaume Dubach}
\author{Hai An Mai}
\address{Centre de Mathématiques Laurent Schwartz, École polytechnique, Institut Polytechnique de Paris, Palaiseau, France}
\email{guillaume.dubach@polytechnique.edu}
\thanks{The authors gratefully acknowledge support from the Fondation de l'École polytechnique, as well as from the Agence Nationale de la Recherche: ANR-25-CE40-5672 and ANR-25-CE40-1380.}

\date{June 2026}

\usepackage{enumerate}
\usepackage{thmtools, thm-restate}

\begin{document}

\begin{abstract}
    This work establishes a bipartite generalization of the Harer-Zagier formula using non-Hermitian Random Matrix Theory. More specifically, we use a decomposition of powers of Ginibre eigenvalues as a superposition of independent point processes to identify all coefficients of the generating function of the genus of a surface obtained by a random bipartite pairing of the sides of one polygon with $kM$ sides and $k$ polygons with $M$ sides.
\end{abstract}

\maketitle

\vspace{.05in}

\tableofcontents

\vspace{.05in}

\section{Introduction}

\vspace{.05in}

In the celebrated work \cite{HarerZagier}, Harer and Zagier endeavored to study the Euler characteristic of moduli spaces of curves; this boils down to evaluating the numbers $\e_{g}(n)$, which are defined as the number of ways a surface of genus $g \geq 0$ can be obtained by a pairing of the sides of one polygonal face with $2n$ sides (as represented on Figure \ref{fig:random-general}). Using techniques from hermitian Random Matrix Theory, especially a remarkable three-term recurrence on the moments of a GUE matrix, they obtain several equivalent characterizations of these numbers $\e_{g}(n)$. For instance, the generating series of these coefficients is given by the following formula:

\begin{theorem}[Harer-Zagier generating function \cite{HarerZagier}]\label{thm_HZ_gen_fun}
    Let $n$ be a non-negative integer; then the following equality holds:
    \[\sum_{g=0}^{\lfloor n/2\rfloor} \e_g(n) \cdot N^{n+1-2g} = (2n-1)!! \sum_{l=0}^{n} 2^{l}\binom{n}{l}\binom{N}{l+1}\]
    for all integers $N$.
\end{theorem}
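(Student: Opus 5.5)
The plan is to realize the left-hand side as a Gaussian matrix integral and then evaluate that integral exactly. By Wick's theorem, for an $N\times N$ GUE matrix $H$ with $\E{|H_{ij}|^2}=1$ we have
\[
\E{\tr H^{2n}} = \sum_{\text{pairings}} N^{\#\text{vertices}}.
\]
The sum runs over pairings of the $2n$ sides of a polygon. Each side carries two indices, each index cycle corresponds to a vertex of the glued surface, and there is one face and $n$ edges. Euler's formula $V-n+1=2-2g$ gives $V=n+1-2g$. Hence
\[
T_n(N):=\E{\tr H^{2n}} = \sum_g \e_g(n) N^{n+1-2g}.
\]
Both sides of the theorem are then polynomials in $N$, so it suffices to prove the identity for all positive integers $N$.

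Next I compute $T_n(N)$ exactly by passing to the generating function $\sum_n T_n(N)\, s^{2n}/(2n-1)!!$, or equivalently $\E{\tr e^{sH}}$. Using the determinantal structure of GUE eigenvalues with Hermite functions, the one-point density is $\rho_N(x)=\sum_{k<N}\varphi_k(x)^2$, so that
\[
\E{\tr e^{sH}}=\sum_{k=0}^{N-1}\int e^{sx}\varphi_k(x)^2\,dx .
\]
Each of these integrals is classical. It equals $e^{s^2/2}L_k(-s^2)$, where $L_k$ is a Laguerre polynomial; this follows from the Mehler or generating-function formula for Hermite polynomials. Summing over $k$ with the identity $\sum_{k<N}L_k=L_{N-1}^{(1)}$ gives
\[
\E{\tr e^{sH}} = e^{s^2/2}\sum_{l\ge 0}\binom{N}{l+1}\frac{s^{2l}}{l!}.
\]

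The last step is to extract the coefficient of $s^{2n}/(2n)!$. Expanding $e^{s^2/2}=\sum_m s^{2m}/(2^m m!)$ and collecting terms with $m+l=n$, the coefficient is
\[
(2n)!\sum_l \binom{N}{l+1}\frac{1}{l!\,2^{n-l}(n-l)!}.
\]
Since $(2n)!=2^n\,n!\,(2n-1)!!$, this equals
\[
(2n-1)!!\sum_l 2^l\binom{n}{l}\binom{N}{l+1},
\]
which is the claimed right-hand side.

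The main obstacle is the exact Laplace-transform computation of $\int e^{sx}\varphi_k^2$ together with the Laguerre summation, since the rest is bookkeeping. I would carry it out by applying the bilinear generating function $\sum_{j,k} H_j(x)H_k(x)\,u^jv^k/(j!\,k!)=e^{2x(u+v)-u^2-v^2}$. After Gaussian integration against $e^{sx-x^2}$, one reads off the diagonal terms $u^kv^k$, which yields the Laguerre polynomials directly. As a fallback, one can instead derive the Harer–Zagier three-term recurrence for $T_n(N)$ and check that the right-hand side satisfies the same recurrence with the same initial values.
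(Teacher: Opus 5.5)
Your argument is correct, and it takes a genuinely different route from the paper. You work with the GUE and, since $\tr H^{2n}$ is a linear statistic, you only need its one-point density. Writing $\E{\tr e^{sH}}=\sum_{k<N}\int e^{sx}\varphi_k(x)^2\,dx$, the Hermite--Laguerre computation gives $e^{s^2/2}\sum_{l}\binom{N}{l+1}s^{2l}/l!$. The Laguerre summation there is just the hockey-stick identity $\sum_{k<N}\binom{k}{i}=\binom{N}{i+1}$. With the weight $e^{-x^2/2}$, which matches $\E{|H_{ij}|^2}=1$, the constants check out.

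The paper never uses the GUE. It first proves the bipartite formula (Theorem \ref{thm_bipartite_HZ_gen_fun}) for $T_{k,M}(N)=\E{\tr(G^{kM})\overline{\tr(G^M)^k}}$ with $G$ complex Ginibre. It evaluates this at $N=qM$ through the block decomposition of $\{\l_i^M\}$ and concludes by polynomial interpolation. It then specializes to $M=2$, using two further ingredients:
\begin{enumerate}
\item the combinatorial correspondence $\e_g(k,2)=2^k k!\,\e_g(k)$, since a $2$-gon glued to two sides acts as a direct identification of those sides;
\item the coefficient extraction $[x^{2n+1}](1+x)^N\big((1+x)^2-1\big)^n$, which turns $\frac{1}{2n+1}\sum_l(-1)^l\binom{n}{l}F^+_{2n+1}(N-2l)$ into the stated form.
\end{enumerate}

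Your route is shorter and self-contained, and it yields the whole exponential generating function at once. However, it relies on the fact that a single trace only sees the one-point function. The bipartite quantity is a product of $k+1$ traces and would need higher correlation functions, which is exactly what the independent Ginibre blocks let the paper avoid. The paper's heavier machinery is the price of that generalization.

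Two minor points. First, $\E{\tr e^{sH}}=\sum_n T_n(N)s^{2n}/(2n)!$, not $\sum_n T_n(N)s^{2n}/(2n-1)!!$; you use the correct normalization when extracting coefficients, so nothing breaks. Second, if you run the bilinear generating function with physicists' Hermite polynomials, the weight $e^{-x^2}$ corresponds to $\E{|H_{ij}|^2}=1/2$. You then get $e^{s^2/4}L_k(-s^2/2)$ and must rescale $s\mapsto\sqrt{2}\,s$.
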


\begin{figure}[ht]
    \centering
    \includegraphics[width=6cm]{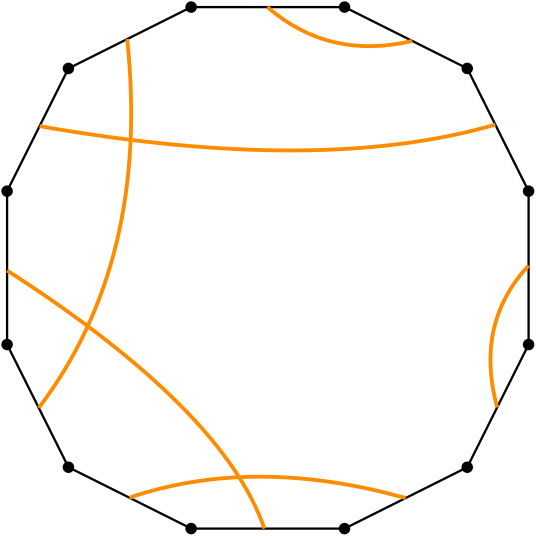}
    \caption{A pairing between the sides of a $12$-gon ($n=6$).}
    \label{fig:random-general}
\end{figure}

\newpage
\noindent Equivalently, the coefficients $\e_g(n)$ can be expressed with a closed form:

\begin{theorem}[Harer-Zagier coefficients]\label{thm_HZ_coefficient}
    For $n,g \geq 1$ we have the following equality
    \[\e_g(n) = (2n-1)!!\cdot 2^{n-2g}\sum_{p=0}^{2g} \frac{(-2)^p}{(n+1+p-2g)!} \binom{n}{n+p-2g}\sfir{n+1+p-2g}{n+1-2},\]
    where $\sfir{n}{k}$ is the unsigned Stirling number of the first kind.
\end{theorem}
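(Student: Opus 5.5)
The plan is to read the closed form off Theorem~\ref{thm_HZ_gen_fun} by comparing coefficients of powers of $N$. Both sides of that identity are polynomials in $N$: the left side is $\sum_g \e_g(n) N^{n+1-2g}$, and each $\binom{N}{l+1}$ on the right is a polynomial of degree $l+1$. The identity holds for all integers $N$, i.e.\ for infinitely many values, so it is an identity in $\mathbb{Q}[N]$. Hence $\e_g(n)$ equals the coefficient of $N^{n+1-2g}$ on the right-hand side.

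To extract that coefficient, I will expand each binomial through the falling factorial,
\[
\binom{N}{l+1}=\frac{N(N-1)\cdots(N-l)}{(l+1)!}=\frac{1}{(l+1)!}\sum_{k=0}^{l+1}(-1)^{l+1-k}\sfir{l+1}{k}N^k .
\]
This is the defining property of the unsigned Stirling numbers of the first kind. It follows by induction on $l$ from $x^{\underline{m+1}}=x^{\underline{m}}(x-m)$ together with the recurrence $\sfir{m+1}{k}=\sfir{m}{k-1}+m\sfir{m}{k}$.

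Fix $k=n+1-2g$. Since $\sfir{l+1}{k}=0$ for $l+1<k$, only the indices $l\ge n-2g$ contribute, and $l\le n$ throughout. I reindex by $l=n+p-2g$ with $0\le p\le 2g$. Then $l+1-k=p$, and the coefficient of $N^{n+1-2g}$ becomes
\[
(2n-1)!!\sum_{p=0}^{2g}2^{n+p-2g}\binom{n}{n+p-2g}\frac{(-1)^p}{(n+1+p-2g)!}\sfir{n+1+p-2g}{n+1-2g}.
\]
Factoring out $2^{n-2g}$ and combining $2^p(-1)^p=(-2)^p$ gives exactly the stated formula. The Stirling number's lower index is $n+1-2g$; this is what the statement's ``$n+1-2$'' should read.

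There is no real obstacle; the argument is a coefficient extraction. The only points needing care are these:
\begin{itemize}
\item justifying the passage from equality for all integers $N$ to equality of polynomial coefficients;
\item the range bookkeeping. If $2g>n$, the terms with $n+p-2g<0$ vanish because the binomial coefficient is zero. This is consistent with the convention $\e_g(n)=0$ for $g>\lfloor n/2\rfloor$, and with the left-hand side having no such powers of $N$.
\end{itemize}
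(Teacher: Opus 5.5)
Your proposal is correct. The paper gives no separate proof of this statement and simply presents it as equivalent to Theorem~\ref{thm_HZ_gen_fun}; your extraction of the coefficient of $N^{n+1-2g}$, via the signed Stirling expansion of $\binom{N}{l+1}$ and the reindexing $l=n+p-2g$, is exactly that equivalence, the same kind of computation the paper carries out in Section~\ref{sec:reccoef} for the bipartite formula, and you are right that the lower index ``$n+1-2$'' should read $n+1-2g$.
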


The present work deals with a bipartite version of the above formulas. Let $k$ and $M$ be positive integers. Given a $kM$-gon and $k$ distinct $M$-gons, we consider all bipartite edge pairings between the $kM$-gon and the $k$ distinct $M$-gons; by bipartite, we mean that no pairing is allowed within the $kM$-gon, or between the smaller $M$-gons; this is represented on Figure \ref{fig:random-bipartite}. With the same interpretation of edge pairings as before, we identify them as one edge and connect the faces without any twists to form an orientable surface. The question is then to study the genus of a~surface obtained in this way. We will denote the number of such pairings leading to a surface of genus $g$ as $\e_g(k,M)$.

\begin{figure}[ht]
    \centering
    \includegraphics[width=7.5cm]{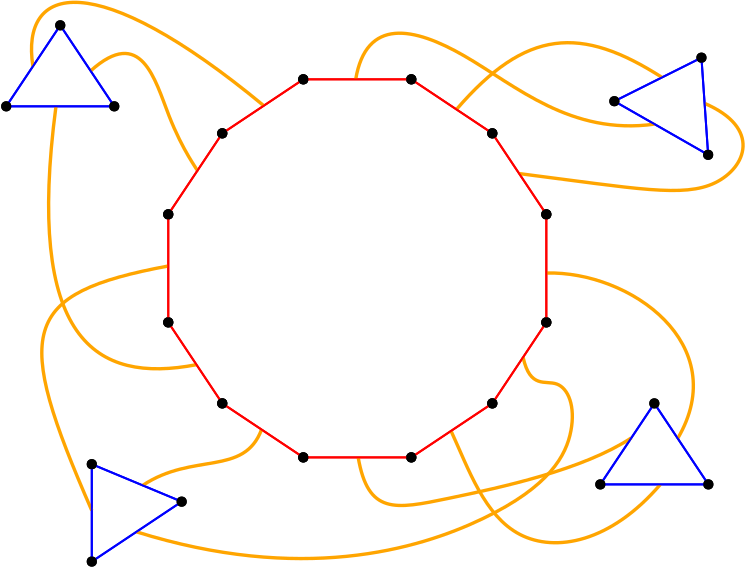}
    \caption{Example of a bipartite pairing between the edges of a dodecagon and those of four triangles, i.e. $k=4$, $M=3$.}
    \label{fig:random-bipartite}
\end{figure}

\noindent We establish the following formula for the generating function of these coefficients:

\begin{theorem}[Bipartite generating function]\label{thm_bipartite_HZ_gen_fun}
    Let $k$ and $M$ be positive integers. Then we have the following equality
    \begin{equation}\label{eqn:main}
    \sum_{g =0}^{\lfloor k(M-1)/2\rfloor} \e_g(k,M) \cdot N^{k(M-1)+1-2g} = \frac{1}{kM+1} \sum_{l=0}^k (-1)^{l} \binom{k}{l} F_{kM+1}^+(N-lM).    
    \end{equation}
    for all integers $N$, where $F_{n}^+$ denotes the rising factorial (Def. \ref{def:risfalfact}).
\end{theorem}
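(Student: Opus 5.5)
Our plan is to realize the left-hand side of \eqref{eqn:main} as a Ginibre moment and then evaluate that moment with Kostlan's theorem, in its form for powers of eigenvalues.

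\textbf{Step 1: Wick expansion.} Let $G$ be an $N\times N$ complex Ginibre matrix with i.i.d. standard complex Gaussian entries, so that $\mathbb{E}[G_{ij}\bar G_{kl}]=\delta_{ik}\delta_{jl}$ and $\mathbb{E}[G_{ij}G_{kl}]=0$. Consider
\[ \mathbb{E}\big[\tr(G^{kM})\,\overline{\tr(G^M)^k}\big]. \]
By the complex Wick formula, only pairings of each $G$-factor with a $\bar G$-factor contribute. These are exactly the bipartite gluings of the $kM$-gon (carried by $\tr G^{kM}$) with the $k$ distinct $M$-gons (carried by the $k$ factors $\tr \bar G^M$).

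Each such pairing contributes $N^{\#\text{vertices}}$. The glued surface has $kM$ edges and $k+1$ faces, and it is connected because every small polygon is glued to the big one. Euler's formula $V-kM+k+1=2-2g$ then gives $V=k(M-1)+1-2g$. Hence
\[ \mathbb{E}\big[\tr(G^{kM})\,\overline{\tr(G^M)}^{\,k}\big]=\sum_g \e_g(k,M)\,N^{k(M-1)+1-2g}. \]
This gives the identity for all positive integers $N$. Since both sides of \eqref{eqn:main} are polynomials in $N$, it then holds identically.

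\textbf{Step 2: eigenvalue reduction.} The moment depends only on the eigenvalues $\lambda_1,\dots,\lambda_N$, so it equals
\[ \mathbb{E}\Big[\sum_i \lambda_i^{kM}\,\Big(\overline{\textstyle\sum_j \lambda_j^{M}}\Big)^{k}\Big]. \]
We now use the decomposition announced in the abstract, a generalization of Kostlan's theorem due to Rains-type arguments. The multiset $\{\lambda_i^M\}$ splits into $M$ independent point processes. For $r=0,\dots,M-1$, the $r$-th process is the set of eigenvalues of a determinantal ensemble whose moduli $|\mu|^{2/M}$ are independent Gamma variables with shapes $r+1, r+1+M, r+1+2M,\dots$ (those not exceeding $N$), and whose arguments are jointly rotation-invariant within each block.

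Write $p_m=\sum_i\lambda_i^m$. By rotation invariance, $\mathbb{E}[p_{kM}\,\bar p_M^{\,k}]$ only involves terms in which the total angular degrees match. I would compute it as the coefficient extraction $\mathbb{E}[p_{kM}\bar p_M^k]=\frac{\partial^k}{\partial \bar t^k}\big|_0\,\mathbb{E}[p_{kM} e^{\bar t \bar p_M}]$. Alternatively, I would expand $\bar p_M^k$ over set partitions and use the block-independence and the radial Gamma laws.

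A cleaner route: for Ginibre itself, $\mathbb{E}[\lambda^{a}\bar\lambda^{b}\cdots]$ of power sums can be obtained from the known formula $\mathbb{E}[p_a\bar p_b]=0$ unless the degrees balance. More efficiently, one can use the Schur expansion $p_M^k=\sum_\lambda \chi^\lambda(M^k)s_\lambda$ together with the orthogonality $\mathbb{E}[s_\lambda(G)\overline{s_\mu(G)}]=\delta_{\lambda\mu}\prod_{\square\in\lambda}(N+c(\square))$. Since $p_{kM}=\sum_{\text{hooks}}(-1)^{\text{leg}}s_{\text{hook}}$, only hook shapes $(kM-l,1^{l})$ survive. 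Their content products give $F^+_{kM}$-type factors:
\[ \prod_{\square}(N+c(\square))=\frac{F^+_{kM}(N-l)}{\cdots}, \]
an explicit ratio of rising factorials.

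\textbf{Step 3: the characters.} The character $\chi^{(kM-l,1^l)}(M^k)$ is computed by the Murnaghan–Nakayama rule. A hook admits a border-strip decomposition into $M$-strips only when $l\equiv$ something mod $M$, or when the strip straddles the corner. The character values are then signed binomials, essentially $(-1)^{\cdot}\binom{k-1}{\cdot}$.

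Summing over hooks should produce
\[ \sum_{j}(-1)^j\binom{k}{j}\frac{F^+_{kM+1}(N-jM)}{kM+1}. \]
This works via the telescoping identity $F^+_{kM+1}(x)-F^+_{kM+1}(x-1)=(kM+1)F^+_{kM}(x)$ (up to indexing), applied in blocks of $M$ consecutive hooks. Grouping the $kM$ hooks into $k$ blocks, each block of $M$ consecutive hooks with the same character telescopes to a difference of $F^+_{kM+1}$ values spaced by $M$. This yields exactly the alternating binomial sum in \eqref{eqn:main}.

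\textbf{Main obstacle.} I expect the main difficulty to be computing the hook characters correctly and checking that the binomial coefficients $\binom{k-1}{\cdot}$ combine into $\binom{k}{l}$ after telescoping. I would first verify the case $k=1$, where the identity reduces to $\mathbb{E}|p_M|^2$ and the sum over all $M$ hooks. I would then do $M=1$, where the formula must give $\mathbb{E}[p_k\bar p_1^k]$ and serves as a finite-difference check.
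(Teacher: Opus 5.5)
Your proof is correct, and from Step 2 on it takes a genuinely different route from the paper's. Step 1 is the same genus-expansion identification as Proposition~\ref{prop:TkM}. You then evaluate $\mathbb{E}[p_{kM}\overline{p_M}^{\,k}]$ through the Schur expansion and the orthogonality relation $\mathbb{E}[s_\lambda(G)\overline{s_\mu(G)}]=\delta_{\lambda\mu}\prod_{\square\in\lambda}(N+c(\square))$. Your sketch of the block decomposition is never used and can be dropped.

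Two statements in Step 3 should be corrected, though neither changes the outcome. First, the content product of the hook $(kM-l,1^l)$ is exactly $F^+_{kM}(N-l)$, not a ratio, since its contents are the $kM$ consecutive integers $-l,\dots,kM-l-1$. Second, every hook has nonzero character at $(M^k)$. Strips of size $M$ can only be removed from the end of the arm (sign $+1$) or from the end of the leg (sign $(-1)^{M-1}$), until a hook of size $M$ remains. Hence for $l=aM+j$ with $0\le j<M$ one gets $(-1)^l\chi^{(kM-l,1^l)}(M^k)=(-1)^a\binom{k-1}{a}$. Equivalently, read this off from $\sum_l\chi^{(n-l,1^l)}(\mu)\,y^l=\prod_i\bigl(1-(-y)^{\mu_i}\bigr)/(1+y)$. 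It is this signed character, not $\chi$ itself, that is constant on blocks of $M$ consecutive hooks. Then
\begin{align*}
T_{k,M}(N)&=\sum_{a=0}^{k-1}(-1)^a\binom{k-1}{a}\sum_{j=0}^{M-1}F^+_{kM}(N-aM-j)\\
&=\frac{1}{kM+1}\sum_{a=0}^{k-1}(-1)^a\binom{k-1}{a}\bigl[F^+_{kM+1}(N-aM)-F^+_{kM+1}(N-(a+1)M)\bigr],
\end{align*}
and Pascal's rule $\binom{k-1}{a}+\binom{k-1}{a-1}=\binom{k}{a}$ gives the stated formula, as you anticipated.

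The paper, by contrast, never leaves the eigenvalues. It evaluates $T_{k,M}$ only at $N=qM$ with $q>k$, where Theorem~\ref{thm:blocks} gives $M$ independent blocks of equal size $q$. It then expands each Vandermonde over pairs $(\sigma,\tau)$ and shows that $\tau\circ\sigma^{-1}$ must be a single cycle with one right jump. The factor $\binom{k}{r}$ comes from a Stirling-number identity, and the result extends by polynomiality.

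Your route is shorter and yields the value for every $N$ directly, with no interpolation. It also makes the alternating binomial structure transparent: it is just the factor $(1-(-y)^M)^k$. The price is importing symmetric-group character theory, which is essentially the Jackson--Chen approach the paper cites and deliberately replaces by RMT tools in order to showcase the block decomposition.

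Note, however, that your one analytic input is itself a short eigenvalue computation. Write $s_\lambda=\det\bigl(z_i^{\lambda_j+N-j}\bigr)/\Delta(z)$ and integrate against the Ginibre eigenvalue density, which is proportional to $|\Delta(z)|^2e^{-\sum_i|z_i|^2}$. Andr\'eief's identity then gives $\mathbb{E}[s_\lambda\overline{s_\mu}]=\delta_{\lambda\mu}\prod_{j=1}^N(\lambda_j+N-j)!/(N-j)!$, which is the content product. So your argument can be made self-contained within RMT as well.
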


\noindent We also derive an explicit closed-form expression for the number of pairings in Section~\ref{sec:reccoef}:

\begin{theorem}[Bipartite coefficients]\label{thm_bipartite_HZ_exact_coeff}
    For all integers $k,M >0$, $g \geq 0$, we have the following identity
    \[\e_g(k,M) = \frac{(-1)^k k!}{kM+1}\sum_{p=0}^{2g} \sfir{kM+1}{p+kM+1-2g}\binom{p+kM+1-2g}{p+k}\ssec{p+k}{k}(-M)^{p+k},\]
    where $\sfir{n}{k}$ and $\ssec{n}{k}$ are, respectively, Stirling numbers of the first and second kind.
\end{theorem}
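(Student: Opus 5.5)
We derive Theorem \ref{thm_bipartite_HZ_exact_coeff} from the generating function identity \eqref{eqn:main} of Theorem \ref{thm_bipartite_HZ_gen_fun}, which we take as given. Both sides are polynomials in $N$ that agree on all integers, so they are equal as polynomials. The plan is to expand the right-hand side in powers of $N$ and read off the coefficient of $N^{k(M-1)+1-2g}$.

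\emph{Expanding the rising factorial.} We first use the Stirling expansion of the rising factorial,
\[F^+_{n}(x)=\sum_{j=0}^{n}\sfir{n}{j}x^j .\]
Applied with $n=kM+1$ and $x=N-lM$, followed by the binomial theorem, this gives
\[F^+_{kM+1}(N-lM)=\sum_{j}\sfir{kM+1}{j}\sum_{r}\binom{j}{r}N^{r}(-lM)^{j-r}.\]

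\emph{Performing the alternating sum over $l$.} The sum over $l$ is a finite difference, evaluated by the standard identity
\[\sum_{l=0}^k(-1)^l\binom{k}{l}l^{m}=(-1)^k k!\ssec{m}{k}.\]
Taking $m=j-r$, the right-hand side of \eqref{eqn:main} becomes
\[\frac{(-1)^kk!}{kM+1}\sum_{j,r}\sfir{kM+1}{j}\binom{j}{r}\ssec{j-r}{k}(-M)^{j-r}N^r .\]
The factor $\ssec{j-r}{k}$ vanishes unless $j-r\ge k$. This vanishing is consistent with the left-hand side having degree $kM+1-k$ in $N$.

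\emph{Matching coefficients.} We set $r=k(M-1)+1-2g$, so that $r=kM+1-k-2g$, and introduce the index $p$ by $j=r+p+k$. Then $j=p+kM+1-2g$ and $j-r=p+k$. The binomial $\binom{j}{r}$ equals $\binom{j}{j-r}=\binom{p+kM+1-2g}{p+k}$. The condition $j\le kM+1$ forces $p\le 2g$, and $\ssec{j-r}{k}$ vanishing for $j-r<k$ forces $p\ge0$. Comparing with the left-hand side gives exactly the displayed formula for $\e_g(k,M)$.

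The main obstacle is bookkeeping rather than anything conceptual. One has to check the summation bounds, including $p\le 2g$ and the terms with $r\le 0$. One also has to check the sign conventions: the paper's $F^+$ and its Stirling numbers must match the unsigned expansion written above, because otherwise the sign $(-1)^k$ changes. The finite-difference identity is standard, and I would cite it or prove it quickly from $x^m=\sum_k\ssec{m}{k}F^-_k(x)$.
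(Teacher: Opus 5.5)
Your proposal is correct and follows essentially the same route as the paper. You expand $F^+_{kM+1}(N-lM)$ in unsigned Stirling numbers of the first kind and apply the binomial theorem, collapse the alternating sum over $l$ with $\sum_{l=0}^k(-1)^l\binom{k}{l}l^{m}=(-1)^k k!\ssec{m}{k}$, and reindex to read off the coefficient of $N^{k(M-1)+1-2g}$. Your extra attention to the bounds $0\le p\le 2g$ and to the cases $r\le 0$, where both sides vanish, is a small bit of bookkeeping that the paper leaves implicit.
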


\noindent In Section \ref{sec_recovering_HZ} we recover the Harer-Zagier formula as a particular case of the above bipartite setting, noticing the relation:
\[\e_g(k,2) = k! \cdot 2^k\cdot \e_g(k).\]
between the original Harer-Zagier coefficients $\e_g(k)$ and the bipartite coefficients $\e_g(k,M)$ with $M=2$.
These results were derived from Random Matrix techniques as part of the recent bachelor’s thesis work \cite{Mai2026}. Counting bipartite pairings is equivalent to counting the number of cycles in a commutator between random permutations with prescribed cycle structure, or the number of Grothendieck's \textit{dessins d'enfants} with prescribed degrees, which has also been achieved by other means -- especially combinatorial and representation-theoretic methods. Most importantly, Jackson's works \cite{Jackson1, Jackson2} allow to enumerate products of permutations with given number of cycles, and Chen \cite{Chen} recently recovered and extended this result using representation theory, while generalizing the Harer-Zagier formula by characterizing the number of products of cycles with specified conjugacy classes. The purpose of this work is to rederive equivalent expressions directly from RMT tools, and especially to illustrate the use of the complex Ginibre block-decomposition (Theorem \ref{thm:blocks}) for such a computation. One may hope to go further in this direction: indeed, the systematic use of non-hermitian RMT to tackle combinatorial problems seems particularly promising, due to the great progress made in recent years in the study of spectral moments of complex, real and quaternionic Ginibre ensembles \cites{AkemannByunOh2026,ForresterByun2024}. \\

\noindent The structure of the paper is as follows:
\begin{itemize}
\item Section \ref{sec_preliminaries} presents the statements and provides references for the random matrix results that are required in the proofs of our main theorems.
\item In Section \ref{sec_first_cases}, we analyze the initial cases in detail in order to illustrate the method.
\item In Section \ref{sec_main_result}, we establish the bipartite version of the Harer–Zagier formulas for arbitrary parameters $k$ and $M$, with the exact computation of the generating function (Theorem \ref{thm_bipartite_HZ_gen_fun}), and the derivation of the explicit closed-form expression of the coefficients $\e_g(k,M)$ (Theorem \ref{thm_bipartite_HZ_exact_coeff}).
\item Finally, in Section \ref{sec_recovering_HZ}, we show that our result indeed constitutes a genuine generalization, in the sense that the classical Harer–Zagier formula can be recovered as a~special case.
\end{itemize}

\subsection{Notations}
    We recall a few important definitions and properties that we rely on throughout the paper.

    \begin{definition}
        Let $\a$ be a positive real number. The Gamma distribution with parameter $\a$ is defined by the density
        \[\frac{1}{\Gamma(\alpha)} t^{\alpha-1}e^{-t} \]
        with respect to Lebesgue measure on $\R_+$,
        where $\G$ is the Gamma function, defined as
        \[\Gamma(\alpha) = \int_{\R_+} t^{\alpha-1}e^{-t}\dd t.\]
    \end{definition}
    
    \begin{lemma}\label{lem:gamma}
        Let $\g_{\a}$ be a gamma variable with parameter $\a$, then the $M$-th moment of $\g_{\a}$ is given by
        \[\E{(\g_{\a})^M} = \frac{\G(M+\a)}{\G(\a)}. \]
    \end{lemma}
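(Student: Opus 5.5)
The plan is to compute the integral directly from the density in the definition and then recognize the result as a ratio of Gamma functions. Writing out the expectation gives
\[
\E{(\g_{\a})^M} = \frac{1}{\G(\a)} \int_{\R_+} t^{M} \, t^{\a-1} e^{-t} \dd t = \frac{1}{\G(\a)} \int_{\R_+} t^{(M+\a)-1} e^{-t} \dd t .
\]
The integral on the right is exactly the defining integral of $\G$ evaluated at $M+\a$. This gives $\G(M+\a)/\G(\a)$ immediately.

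The only point needing care is that the integral converges. The argument $M+\a$ has positive real part, since $\a>0$ and $M\ge 0$ is the moment order. Convergence then follows from the integrability of $t^{M+\a-1}$ near $0$ and the exponential decay of $e^{-t}$ at infinity. Because the integrand is nonnegative, the moment is well defined and finite, so no exchange of limits is needed.

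For later use, with $M$ a nonnegative integer, I would also record the equivalent form obtained by iterating $\G(x+1)=x\G(x)$:
\[
\frac{\G(M+\a)}{\G(\a)} = \a(\a+1)\cdots(\a+M-1) = F_M^+(\a).
\]
This rising-factorial form is presumably what connects to $F^+_{kM+1}$ in the main theorem. I expect no real obstacle; the only step requiring attention is matching the exponent to the Gamma integral.
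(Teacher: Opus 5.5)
Your proof is correct and is the same direct computation the paper gives: substitute the density, merge $t^M t^{\alpha-1}$ into $t^{(M+\alpha)-1}$, and recognize the integral defining $\Gamma(M+\alpha)$. The convergence remark and the rising-factorial form $F_M^+(\alpha)$ are harmless additions; the latter is the identity implicitly used in Lemma~\ref{lem:rising-sum} when these moments are later summed.
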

    \begin{proof}
        By direct computation,
        \[\E{\g_{\a}^M} = \frac{1}{\G(\a)}\int_{\R_+} t^M t^{\a-1}e^{-t} \dd t = \frac{1}{\G(\a)}\int_{\R_+}t^{(M+\a)-1} e^{-t}\dd t = \frac{\G(M+\a)}{\G(\a)}.\]
    \end{proof}

    We also introduce the following notations for rising and falling factorials.

    \begin{definition}\label{def:risfalfact}
        Let $n$ be a non-negative integer. We call $F_n^+$ and $F_n^-$ the rising and falling factorial, defined as
        \begin{align*}
        F_n^+(X) & = X(X+1)\cdots(X+n-1) \\
        F_n^-(X) & = X(X-1)\cdots(X-n+1) = (-1)^n F_n^+(-X).
        \end{align*}
        By convention we denote $F_0^+ = F_0^- = 1$. Note that by definition $F_n^+$ and $F_n^-$ are polynomials of degree $n$.
    \end{definition}

    The following lemma will be used multiple times.
    \begin{lemma}\label{lem:rising-sum}
        Let $n$ and $k$ be positive integers, then
        \[\Fp{n}{k} = n \cdot \sum_{j=1}^k \frac{\G(n+j-1)}{\G(j)}.\]
    \end{lemma}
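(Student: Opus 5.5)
We need to prove $F_n^+(k) = n\sum_{j=1}^k \Gamma(n+j-1)/\Gamma(j)$ for positive integers $n,k$. Note that $\Gamma(n+j-1)/\Gamma(j) = (n+j-2)!/(j-1)! = F_{n-1}^+(j)$. The claim thus reads
\[
F_n^+(k) = n\sum_{j=1}^k F_{n-1}^+(j),
\]
which is the discrete analogue of $x^n = n\int_0^x t^{n-1}\,\dd t$. The plan is a telescoping argument.

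The key identity to establish first is the forward difference of the rising factorial:
\[
F_n^+(j) - F_n^+(j-1) = n\,F_{n-1}^+(j), \qquad n\ge 1.
\]
Write
\[
F_n^+(j) = j(j+1)\cdots(j+n-1), \qquad F_n^+(j-1) = (j-1)j\cdots(j+n-2).
\]
Both contain the common factor $j(j+1)\cdots(j+n-2) = F_{n-1}^+(j)$. Factoring it out leaves $(j+n-1)-(j-1) = n$. The case $n=1$ is consistent with the convention $F_0^+=1$, since $F_1^+(j)-F_1^+(j-1) = 1$.

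Summing this identity over $j=1,\dots,k$ telescopes to
\[
F_n^+(k) - F_n^+(0) = n\sum_{j=1}^k F_{n-1}^+(j).
\]
Since $F_n^+(0)=0$ for $n\ge1$ (the product contains the factor $X=0$), this gives the claim. It then remains only to rewrite $F_{n-1}^+(j) = \Gamma(n+j-1)/\Gamma(j)$ using $\Gamma(m+1)=m!$; this is valid because $j\ge1$, so no poles of $\Gamma$ appear.

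There is no real obstacle here. The only points needing care are the identification of the Gamma ratio with a rising factorial and the edge case $n=1$. In that case the sum is $\sum_{j=1}^k 1 = k = F_1^+(k)$, which checks out directly. An alternative route is induction on $k$: the base case $k=1$ gives $n!= n\,\Gamma(n)$, and the inductive step is exactly the difference identity above.
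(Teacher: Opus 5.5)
Your proof is correct and follows the paper's argument: the paper also proves $F_n^+(X)-F_n^+(X-1)=n\,F_{n-1}^+(X)$ for the backward difference operator and telescopes over $j=1,\dots,k$. You are slightly more explicit than the paper, which uses $F_n^+(0)=0$ and the identification $F_{n-1}^+(j)=\Gamma(n+j-1)/\Gamma(j)$ without comment.
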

    \begin{proof}
        The derivation comes from the discrete differential operator:
        \[\partial : P(X) \mapsto P(X) - P(X-1).\]
        Note that
        \[\partial F_n^+ = X\cdots(X+n-1) - (X-1)\cdots(X+n-2)\]
        \[= X\cdots(X+n-2) \cdot[(X+n-1) - (X - 1)] = n F_{n-1}^+.\]
        Thus
        \[\Fp{n}{k} = \sum_{j=1}^k \partial \Fp{n}{j} = \sum_{j=1}^k n \Fp{n-1}{j} = n\sum_{j=1}^k \frac{\G(n+j-1)}{\G(j)}.\]
    \end{proof}

    \begin{definition}
        Let $n$ be a positive integer. The signed Stirling numbers of the first kind $s(n,k)$ are given by the coefficients of the falling factorial:
        \[F_{n}^-(X) = X(X-1)\cdots(X-n+1) = \sum_{k=0}^n s(n,k)X^k.\]
        The unsigned Stirling numbers of the first kind $\sfir{n}{k}$ are given by the coefficients of the rising factorial:
        \[F_{n}^+(X) = X(X+1)\cdots(X+n-1) = \sum_{k=0}^n \sfir{n}{k}X^k.\]
    \end{definition}
    
    \begin{definition}
        Let $n$ and $k$ be non-negative integers. The Stirling number of the second kind $\ssec{n}{k}$ is the number of ways to partition a set of $n$ labeled objects into $k$ non-empty subsets.
    \end{definition}

    \noindent We notice a similar link between Stirling numbers of the second kind and the falling factorials.

    \begin{proposition}\label{prop:stirling_second_falling_fact}
    Let $r$ be a non-negative integer, we have the relation:
        \[\sum_{m=0}^r \ssec{r}{m} F_m^-(X) = X^r.\]
    \end{proposition}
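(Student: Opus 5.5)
The plan is to prove the identity by a double-counting argument: both sides of $\sum_{m=0}^r \ssec{r}{m} F_m^-(X) = X^r$ are polynomials in $X$, so it is enough to check equality at infinitely many points. I will evaluate both sides at every non-negative integer $X = n$ and read each as the number of maps from an $r$-element set to an $n$-element set. The identity then holds as a polynomial identity because the two sides agree on all of $\mathbb{N}$.

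First, the right-hand side $n^r$ counts all functions $f : \{1,\dots,r\} \to \{1,\dots,n\}$.

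Second, I classify each such $f$ by the partition of $\{1,\dots,r\}$ into the nonempty fibres $f^{-1}(y)$.
\begin{itemize}
\item If the image of $f$ has exactly $m$ elements, these fibres form a partition of $\{1,\dots,r\}$ into $m$ non-empty blocks, and there are $\ssec{r}{m}$ such partitions.
\item Conversely, once the partition is fixed, $f$ is determined by assigning distinct values in $\{1,\dots,n\}$ to the $m$ blocks. After ordering the blocks, say by their smallest elements, this can be done in $n(n-1)\cdots(n-m+1) = F_m^-(n)$ ways.
\end{itemize}
This gives a bijection between functions with image of size $m$ and pairs (partition into $m$ blocks, injective labelling of the blocks). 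Summing over $m$ from $0$ to $r$ therefore gives $n^r = \sum_m \ssec{r}{m} F_m^-(n)$.

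The only delicate points are the edge cases. When $m > n$, the factor $F_m^-(n)$ vanishes, which correctly reflects that no such functions exist. When $r = 0$, the sum reduces to $\ssec{0}{0}F_0^- = 1 = X^0$, using the conventions $\ssec{0}{0}=1$ and $\ssec{r}{0}=0$ for $r>0$.

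There is also a purely algebraic route, by induction on $r$ using $X\,F_m^-(X) = F_{m+1}^-(X) + m F_m^-(X)$ together with the recurrence $\ssec{r+1}{m} = m\ssec{r}{m} + \ssec{r}{m-1}$. I will not take it, since it requires proving that recurrence from the combinatorial definition, which is no simpler than the counting argument.
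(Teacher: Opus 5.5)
Your proof is correct and complete. The paper itself gives no proof of this proposition; it quotes it as a classical identity. So there is no competing argument to compare against, and your double count of maps $\{1,\dots,r\}\to\{1,\dots,n\}$ by their fibre partitions is the standard proof.

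Your last step matters here: you pass from agreement at every $n\in\mathbb{N}$ to an identity of polynomials, because both sides are polynomials in $X$. The paper relies on exactly this, since the proof of Lemma \ref{lem:stirling_second_power} evaluates the identity at $X=-1$, where a purely counting interpretation no longer applies. Your treatment of the edge cases ($m>n$, $n=0$, $r=0$) is also consistent with the paper's conventions $F_0^-=1$ and $\ssec{0}{0}=1$.
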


    \noindent We present two useful properties of Stirling numbers of the second kind:

    \begin{proposition}\label{prop:stirling_second_closed}
        Let $n$ and $k$ be non-negative integers, then
        \[\ssec{n}{k} = \frac{1}{k!} \sum_{j=0}^k (-1)^{k-j} \binom{k}{j}j^n.\]
    \end{proposition}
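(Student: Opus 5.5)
We derive the formula of Proposition \ref{prop:stirling_second_closed} by inclusion--exclusion on surjections, using the combinatorial definition of $\ssec{n}{k}$. (Proposition \ref{prop:stirling_second_falling_fact} could be combined with finite differences instead, but the counting route is shorter.)

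First, count surjections. Partitioning an $n$-set into $k$ non-empty unlabeled blocks and then labeling the blocks by $\{1,\dots,k\}$ gives a bijection with surjections $[n]\to[k]$. Since all blocks are non-empty and distinct, each partition yields exactly $k!$ distinct labelings. Hence the number $S(n,k)$ of surjections from $[n]$ onto $[k]$ equals $k!\ssec{n}{k}$.

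Next, count $S(n,k)$ by inclusion--exclusion. For $i\in[k]$, let $A_i$ be the set of maps $[n]\to[k]$ missing the value $i$. For a subset $I\subseteq[k]$ with $|I|=m$, the maps missing every value in $I$ are exactly the maps into $[k]\setminus I$, so
\[\Big|\bigcap_{i\in I}A_i\Big|=(k-m)^n.\]
Inclusion--exclusion then gives
\[S(n,k)=\sum_{m=0}^k(-1)^m\binom{k}{m}(k-m)^n .\]
Substituting $j=k-m$ and using $\binom{k}{k-j}=\binom{k}{j}$ turns this into $\sum_{j=0}^k(-1)^{k-j}\binom{k}{j}j^n$. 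Dividing by $k!$ gives the claimed identity.

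The only delicate point is the degenerate cases, under the convention $0^0=1$.
\begin{itemize}
\item If $n=0$ and $k\ge1$, the sum equals $(1-1)^k=0=\ssec{0}{k}$.
\item If $k=0$, the sum reduces to $0^n$, which is $\ssec{n}{0}$, namely $1$ when $n=0$ and $0$ when $n>0$.
\item If $n<k$, there are no surjections, so both sides vanish.
\end{itemize}
The inclusion--exclusion argument already covers all of these uniformly, provided one checks that the map count $(k-m)^n$ is read with $0^0=1$.
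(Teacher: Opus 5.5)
Your proof is correct and complete. Counting surjections as $k!\ssec{n}{k}$ and then applying inclusion--exclusion over the omitted values is the standard argument. Reading $(k-m)^n$ with $0^0=1$ handles the cases $n=0$ and $k=0$ uniformly.

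The paper gives no proof of Proposition \ref{prop:stirling_second_closed}. It is quoted as a standard fact in the notations subsection, so there is no competing argument to compare against.

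The alternative you mention would apply the $k$-th forward difference at $0$ to the identity of Proposition \ref{prop:stirling_second_falling_fact}. Since $\Delta F_m^- = mF_{m-1}^-$ and $F_j^-(0)=0$ for $j\ge 1$, only the term $m=k$ survives. That route is closer in spirit to the discrete-derivative argument of Lemma \ref{lem:rising-sum}. However, it depends on another proposition the paper also leaves unproved. Your counting argument instead works directly from the combinatorial definition of $\ssec{n}{k}$ that the paper adopts.

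Your care with $0^0=1$ is also what is needed in Section \ref{sec:reccoef}, where the paper applies the identity with exponent $p-r=0$.
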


    \begin{lemma}\label{lem:stirling_second_power} 
        Let $r$ be a positive integer, then 
        \[\sum_{m=0}^r (-1)^m\cdot m! \cdot \ssec{r}{m} = (-1)^r.\]
    \end{lemma}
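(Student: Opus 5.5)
We prove Lemma \ref{lem:stirling_second_power} by plugging $X=-1$ into Proposition \ref{prop:stirling_second_falling_fact}. That proposition gives the polynomial identity
\[\sum_{m=0}^r \ssec{r}{m} F_m^-(X) = X^r,\]
valid for every value of $X$, and it suffices to compute $F_m^-(-1)$.

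By Definition \ref{def:risfalfact},
\[F_m^-(-1) = (-1)(-2)\cdots(-m) = (-1)^m\, m!.\]
Equivalently, using the relation $F_m^-(X) = (-1)^m F_m^+(-X)$, we get $F_m^-(-1)=(-1)^m F_m^+(1) = (-1)^m m!$. This also holds for $m=0$ under the convention $F_0^-=1$.

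Substituting into the identity yields
\[\sum_{m=0}^r (-1)^m\, m!\, \ssec{r}{m} = (-1)^r,\]
which is the claim.

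There is no real obstacle. The only points to check are that Proposition \ref{prop:stirling_second_falling_fact} is an identity of polynomials, so that evaluation at the negative integer $-1$ is legitimate, and that the boundary term $m=0$ behaves correctly. That term vanishes for $r\ge 1$, since $\ssec{r}{0}=0$ for $r\geq 1$.
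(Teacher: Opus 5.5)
Your proposal is correct and is exactly the paper's argument: evaluate the polynomial identity $\sum_{m=0}^r \ssec{r}{m} F_m^-(X) = X^r$ at $X=-1$ and use $F_m^-(-1) = (-1)^m m!$. Your extra remark about the $m=0$ term is harmless but unnecessary, since that term appears identically on both sides.
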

    \begin{proof}
        We will use the Proposition \ref{prop:stirling_second_falling_fact} with $X = -1$:
        \[(-1)^r = \sum_{m=0}^r \ssec{r}{m} F_m^-(-1)= \sum_{m=0}^r  (-1)^m m!\ssec{r}{m}.\]
    \end{proof}

\section{Preliminaries: Ginibre eigenvalues and genus expansion}\label{sec_preliminaries}

The present work relies on the properties of the complex Ginibre ensemble of random matrices. For a~positive integer $N$, let $G$ be an (unscaled) $N \times N$ Complex Ginibre matrix, i.e. a random matrix with i.i.d. Gaussian entries
     $$ G_{ij} \sim \N_\C(0,1) $$
for all $1 \leq i,j \leq N$. The fact that $G$ is a non-Hermitian matrix with complex coefficients is relevant to the purpose of modeling bipartite pairings with orientation-preserving identifications of edges. We build upon foundations that were laid especially in the works \cites{DubachPeled,DubachCycles,Ginibre1965,HKPV,Kostlan} among other sources, and recall here the theorems that will be needed in the next sections.

\subsection{Radii and powers of complex Ginibre eigenvalues}\label{sec:randmatrix}
Eigenvalues of Ginibre Ensembles have been studied extensively since the seminal 1965 paper \cites{Ginibre1965}, and we make use of some of their well-known properties. The first property we need is the independence of eigenvalue radii, a~result known as Kostlan's theorem:

    \begin{theorem}[Kostlan, \cite{Kostlan}]\label{thm:kostlan} Let $\{\l_1,\ldots,\l_N\}$ be the set of eigenvalues of Ginibre $G$, the following equality in distribution holds:
    \[\{|\l_1|^2,\ldots,|\l_N|^2\} \stackrel{d}{=} \{\g_1,\ldots,\g_N\},\]
    where $(\g_k)$ are independent gamma variables with parameters $1,\ldots,N$ respectively.
    \end{theorem}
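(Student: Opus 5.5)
The plan is to go through the joint eigenvalue density of $G$ and separate moduli from arguments. By Ginibre's formula, the eigenvalues of the unscaled complex Ginibre matrix have joint density on $\C^N$ (for the unordered set) proportional to
\[\prod_{i<j}|\l_i-\l_j|^2 \prod_{i=1}^N e^{-|\l_i|^2}.\]
The first step is to write the Vandermonde determinant $\det(\l_i^{j-1})$ and expand $|\det|^2 = \det(\l_i^{j-1})\overline{\det(\l_i^{j-1})}$ as a double sum over permutations $\sigma,\tau\in S_N$:
\[\sum_{\sigma,\tau}\sign(\sigma)\sign(\tau)\prod_i \l_i^{\sigma(i)-1}\bar{\l_i}^{\tau(i)-1}.\]

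The test will be against a symmetric function $f(|\l_1|^2,\ldots,|\l_N|^2)$, where $f$ is bounded and measurable. In polar coordinates $\l_i=r_ie^{i\th_i}$, integrating over the angles $\th_i\in[0,2\pi)$ kills every term with $\sigma\neq\tau$. The reason is that such a term carries a factor $e^{i(\sigma(i)-\tau(i))\th_i}$ with a nonzero frequency for some $i$. The diagonal terms $\sigma=\tau$ remain, each contributing $\prod_i r_i^{2(\sigma(i)-1)}$.

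Next I change variables to $t_i=r_i^2$, using $r\,dr=dt/2$. Each diagonal term then becomes $\prod_i t_i^{\sigma(i)-1}e^{-t_i}$, up to constants. This is, up to normalisation, the joint density of independent gamma variables with parameters $1,\ldots,N$, with the labels permuted by $\sigma$.

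Summing over $\sigma$ symmetrises the labels. The integral of $f$ against the eigenvalue density is therefore a constant times $\E{f(\g_1,\ldots,\g_N)}$. Taking $f\equiv 1$ shows the constant is $1$, since both sides are probability measures. Because symmetric functions determine the law of an unordered set, this gives the claimed equality in distribution. The normalisation constant $\prod_j \G(j)$ is recovered from Lemma \ref{lem:gamma}, or simply from the fact that both measures have mass one.

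The main technical point is justifying the joint density itself, which needs the Schur decomposition and its Jacobian. That is classical (Ginibre 1965), and I would cite it rather than reprove it. The rest is bookkeeping of constants, which is where I expect minor care to be needed.
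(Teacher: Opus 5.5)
Your proof is correct. By Fubini (valid since $f$ is bounded and each term is a polynomial times a Gaussian), the angular integrals remove every pair $\sigma\neq\tau$. The substitution $t_i=r_i^2$ then turns each diagonal term into $\pi^N\prod_i t_i^{\sigma(i)-1}e^{-t_i}\,\dd t_i$. Symmetry of $f$ together with $f\equiv 1$ gives both the claim and the normalisation $\pi^N N!\prod_{j=1}^N\G(j)$.

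The paper does not prove this statement; it simply cites Kostlan, so there is no proof of its own to compare against. Your computation is, however, the same mechanism the paper uses in Section \ref{sec_main_result}: it expands $\prod_{i<j}|z_i-z_j|^2$ over pairs of permutations and keeps only the rotation-invariant terms. With $M=1$ and $r=0$ that argument reduces to your diagonal computation. Within the paper's framework, the statement also follows immediately from Theorem \ref{thm:hkpv}, by applying $z\mapsto|z|^{2/M}$ to each point.
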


    Another phenomenon is that high powers (i.e. with exponent $M \geq N$) are distributed as a set of independent variables with explicit distributions:

    \begin{theorem}[Hough, Krishnapur, Peres, Virág \cite{HKPV}]\label{thm:hkpv}
    Let $\{\l_1,\ldots,\l_N\}$ be the set of eigenvalues of a complex Ginibre matrix $G$. For any $M\geq N$, the following equality in distribution holds:
    \[\{\l_1^M, \ldots, \l_N^M\} \stackrel{d}{=}\{\g_1^{M/2}e^{i\th_1}, \ldots, \g_N^{M/2}e^{i\th_N}\},\]
    where $(\g_k), (\th_k)$ are independent and $(\th_k)$ are i.i.d. real variables uniform in $[0,2\pi]$ and $\g_1,\ldots,\g_N$ are gamma variables with corresponding parameters $1,\ldots,N$.
    \end{theorem}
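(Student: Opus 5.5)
The plan is to work directly with the explicit joint density of the Ginibre eigenvalues and to show that, after raising to the $M$-th power, all cross terms of the squared Vandermonde determinant disappear. Recall (Ginibre \cite{Ginibre1965}) that the unordered eigenvalues $(\l_1,\ldots,\l_N)$, taken in uniformly random order, have density on $\C^N$
\[
p(z_1,\ldots,z_N)=\frac{1}{N!\,\pi^N\prod_{k=1}^{N}\G(k)}\prod_{i<j}|z_i-z_j|^2\, e^{-\sum_i |z_i|^2}.
\]
It suffices to prove that $\E{f(\l_1^M,\ldots,\l_N^M)}$ equals the same expectation for the right-hand side of Theorem \ref{thm:hkpv}, for every bounded measurable symmetric $f:\C^N\to\R$.

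First, expand the Vandermonde factor:
\[
\prod_{i<j}|z_i-z_j|^2=\sum_{\sigma,\tau\in S_N}\sign(\sigma)\sign(\tau)\prod_{i=1}^N z_i^{\sigma(i)-1}\,\overline{z_i}^{\,\tau(i)-1}.
\]
Pass to polar coordinates $z_i=r_ie^{i\varphi_i}$. The $(\sigma,\tau)$ term then contributes
\[
\prod_i r_i^{\sigma(i)+\tau(i)-2}e^{-r_i^2}\, e^{i(\sigma(i)-\tau(i))\varphi_i}.
\]
For fixed radii, $f(r_1^Me^{iM\varphi_1},\ldots)$ is $2\pi/M$-periodic in each $\varphi_i$. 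So we integrate in each $\varphi_i$ against $e^{im_i\varphi_i}$ with $m_i=\sigma(i)-\tau(i)$.

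The key step, and the only real obstacle, is the following Fourier observation. A $2\pi/M$-periodic function $h$ satisfies
\[
\int_0^{2\pi}h(\varphi)e^{im\varphi}\dd\varphi=0
\]
unless $M\mid m$. To see this, substitute $\varphi\mapsto\varphi+2\pi/M$: the integral gets multiplied by $e^{2\pi i m/M}$, so it must vanish when $e^{2\pi i m/M}\neq1$. Applied iteratively in each variable (Fubini, with $f$ bounded and the Gaussian weight integrable), this requires $M\mid m_i$ for all $i$. Since $|m_i|\le N-1<M$, this forces $m_i=0$ for all $i$, i.e. $\sigma=\tau$. This is exactly where the hypothesis $M\ge N$ enters.

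Once the cross terms are gone, the expectation becomes
\[
\frac{1}{N!}\sum_{\sigma\in S_N}\int f\bigl((z_i^M)_i\bigr)\prod_{i}\frac{|z_i|^{2(\sigma(i)-1)}e^{-|z_i|^2}}{\pi\,\G(\sigma(i))}\prod_i \dd^2 z_i .
\]
Each factor is a probability density on $\C$: it is rotation invariant, with modulus squared distributed as $\g_{\sigma(i)}$. Indeed, $\int_\C |z|^{2(k-1)}e^{-|z|^2}\dd^2 z=\pi\G(k)$, and the law of $|z|^2$ has density $t^{k-1}e^{-t}/\G(k)$. The normalisation matches the constant in $p$, which can be checked via Lemma \ref{lem:gamma} or directly. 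By symmetry of $f$, every $\sigma$ gives the same contribution. Hence $\{\l_i^M\}$ has the law of $\{w_k^M\}$, where the $w_k$ are independent, rotation invariant, and satisfy $|w_k|^2\sim\g_k$.

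Finally, write $w_k=\g_k^{1/2}e^{i\psi_k}$ with $\psi_k$ uniform and independent of $\g_k$. Then $w_k^M=\g_k^{M/2}e^{iM\psi_k}$, and $M\psi_k \bmod 2\pi$ is again uniform on $[0,2\pi]$ and independent of everything else. This gives the claimed representation. As a byproduct, the case $M=1$ of the diagonal computation (without the vanishing argument) is consistent with Theorem \ref{thm:kostlan}.
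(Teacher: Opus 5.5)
Your proof is correct and complete. The paper gives no proof of this statement; it is quoted from Hough, Krishnapur, Peres and Vir\'ag.

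It can also be read off from Theorem~\ref{thm:blocks} in the regime $M\ge N$. There each $I_p$ with $p\le N$ is the singleton $\{p\}$, and the one-point density $|z|^{2(p-M)/M}e^{-|z|^{2/M}}/(\pi M\,(p-1)!)$ is exactly the law of $w^M$ for a rotation-invariant $w$ with $|w|^2\sim\gamma_p$.

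Your self-contained derivation uses the same mechanism the paper relies on in Section~\ref{sec_main_result}. You expand $\prod_{i<j}|z_i-z_j|^2$ as a double sum over $(\sigma,\tau)\in S_N^2$ and let the angular integration select terms.

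\begin{itemize}
\item \emph{What your route buys:} it isolates exactly where $M\ge N$ enters. The condition $M\mid\sigma(i)-\tau(i)$, together with $|\sigma(i)-\tau(i)|\le N-1<M$, forces $\sigma=\tau$.
\item \emph{What the block decomposition buys:} the regime $M<N$. There the same angular condition only forces $\sigma(i)\equiv\tau(i)\pmod M$, and this residue-class coupling is what produces the independent blocks $I_p$.
\end{itemize}

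One cosmetic point concerns your closing remark about $M=1$. It should not read as ``drop the cross terms'', since for general $f$ they do not vanish when $M=1$. Instead, for $f$ depending only on the moduli, every cross term vanishes for any $M$, which gives Theorem~\ref{thm:kostlan}. More simply, Kostlan's theorem follows from the statement itself by applying $z\mapsto|z|^{2/M}$ pointwise to both configurations.
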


More generally, it was proved in \cite{DubachPowers} that for any power $M$, the powers of the eigenvalues decorrelate to some extent: in the case of intermediate power, when $M < N$, one still finds a description of the spectrum as the superposition of $M$ independent point processes:

    \begin{theorem}[Block Decomposition \cite{DubachPowers}]\label{thm:blocks} Let $\{\l_1,\ldots,\l_N\}$ be the set of eigenvalues of a~complex Ginibre matrix $G$. For $M \geq 1$, the following equality in distribution holds:
    \[\{\l_1^M,\ldots,\l_N^M\} \stackrel{d}{=} \{z_1,\ldots, z_N\},\]
    where by denoting $I_p = \{i \in [N]\mid i \equiv p \pmod M\}$ for $1 \leq p \leq M$, $(z_i)_{i\in I_p}$ form independent blocks for different values of $p$, with joint density
    \[\frac{1}{Z_{N,M,p}} \prod_{\substack{i < j \\ i,j \in I_p}} |z_i-z_j|^2 \prod_{i\in I_p} |z_i|^{\frac{2(p-M)}{M}}e^{-|z_i|^{2/M}}\dd m(z_i),\]
    where
    \[Z_{N,M,p} = \pi^{c_p}c_p! M^{c_p} \prod_{j \in I_p} (j-1)!, ~ \text{and} ~~ c_p = |I_p|.\]
    \end{theorem}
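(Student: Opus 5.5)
The plan is to compare the two point processes through their generating functionals. For a finite simple point process $\{x_i\}$ on $\C$, its law is determined by the quantities $\E{\prod_i (1+f(x_i))}$ as $f$ ranges over bounded measurable test functions. Both $\{\l_i^M\}$ and the superposition of the blocks are determinantal, so these quantities are Fredholm determinants, and it suffices to show that the two determinants agree for every $f$.

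First I recall the Ginibre structure. The eigenvalues have joint density proportional to $\prod_{i<j}|\l_i-\l_j|^2 e^{-\sum|\l_i|^2}$. Writing the Vandermonde as $\det(\l_i^{k})_{i,k}$ with $0\le k\le N-1$ and applying Andréief's identity gives
\[\E{\prod_{i=1}^N \bigl(1+f(\l_i^M)\bigr)} = \det_{0\le k,l\le N-1}\Bigl(\delta_{kl} + \frac{1}{\pi\sqrt{k!\,l!}}\int_\C f(z^M)\, z^k \bar z^l e^{-|z|^2}\dd m(z)\Bigr).\]

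The key observation is that $z\mapsto f(z^M)$ is invariant under the rotation $z\mapsto e^{2i\pi/M}z$. Substituting this rotation multiplies $z^k\bar z^l$ by $e^{2i\pi(k-l)/M}$, so the integral vanishes unless $k\equiv l \pmod M$. Hence the matrix is block diagonal with respect to the residue classes of $k$ mod $M$, and its determinant factorizes into a product of $M$ determinants. With the index $i=k+1$, the residue class of $k$ corresponds exactly to the set $I_p$, where $p\equiv k+1 \pmod M$ and $1\le p\le M$. This factorization is precisely the independence of the blocks at the level of generating functionals.

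Next I compute one block. Write $k=p-1+Ma$ and $l=p-1+Mb$, so that $z^k\bar z^l = |z|^{2(p-1)} w^a \bar w^b$ with $w=z^M$. The map $z\mapsto z^M$ is $M$-to-one, and its Jacobian gives $\dd m(z) = M^{-2}|w|^{2/M-2}\dd m(w)$ on each branch. Summing over the $M$ branches, each block entry becomes
\[\frac{1}{\pi M\sqrt{k!\,l!}}\int_\C f(w)\, w^a\bar w^b\, |w|^{\frac{2(p-M)}{M}} e^{-|w|^{2/M}}\dd m(w).\]
This is exactly the Andréief/Fredholm expression for the point process with density proportional to $\prod_{i<j}|w_i-w_j|^2\prod_i \omega_p(w_i)$, where $\omega_p(w) = |w|^{2(p-M)/M}e^{-|w|^{2/M}}$ and there are $c_p=|I_p|$ points. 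This works because the monomials $w^a$ are orthogonal for the radial weight $\omega_p$. Their squared norms are $\pi M\,\G(Ma+p) = \pi M (j-1)!$ with $j=Ma+p\in I_p$, which matches the normalization $\pi M\, k!$ in the display above. The constant $Z_{N,M,p}$ then comes out of the standard Andréief identity as $c_p!$ times the product of these norms, that is $\pi^{c_p}c_p!M^{c_p}\prod_{j\in I_p}(j-1)!$.

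The main obstacle is the bookkeeping in this block computation: matching the weight exponent, the factor $M$ from the Jacobian together with the branch count, and the norm constants exactly. I would resolve it by checking the case $f=0$ first, where normalization forces both sides to equal $1$. Once the generating functionals coincide for all bounded $f$, equality in distribution of the two finite point processes follows.
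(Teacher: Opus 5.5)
The paper does not prove this statement. It quotes it from \cite{DubachPowers} and uses it as a black box, together with the explicit constants $Z_{N,M,p}$, in Sections 3 and 4. Your argument is a correct and self-contained proof of it.

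I checked the key steps:
\begin{itemize}
\item The Andr\'eief formula with normalization $\pi\sqrt{k!\,l!}$ is right.
\item The rotation $z\mapsto e^{2i\pi/M}z$ kills every entry with $k\not\equiv l \pmod M$, so the matrix is block diagonal with blocks indexed by the classes $I_p$.
\item On each of the $M$ sectors, $\dd m(z)=M^{-2}|w|^{2/M-2}\dd m(w)$. Each entry therefore picks up the factor $1/M$, and the weight becomes $|w|^{2(p-1)/M+2/M-2}e^{-|w|^{2/M}}=|w|^{2(p-M)/M}e^{-|w|^{2/M}}$.
\item One has $\int_\C |w|^{2a}|w|^{2(p-M)/M}e^{-|w|^{2/M}}\dd m(w)=\pi M\,\Gamma(Ma+p)=\pi M\,k!$ for $k=p-1+Ma$. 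This gives exactly the stated $Z_{N,M,p}$.
\end{itemize}

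Two harmless remarks. You do not need to know in advance that $\{\lambda_i^M\}$ is determinantal; your computation proves it. Blocks with $c_p=0$, which occur when $N<M$, simply contribute a factor $1$.

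Compared with a direct proof by permutation expansion, your route is cleaner. That alternative mirrors the paper's own moment computations in Section 4: expand $\prod_{i<j}|z_i-z_j|^2$ over pairs $(\sigma,\tau)$ of permutations. The coordinatewise rotation then forces $\sigma(i)\equiv\tau(i) \pmod M$. One must afterwards track how the coordinates are distributed among the residue classes to recover the factors $c_p!$ and the factorization.

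Working instead with the one-body Gram matrix turns independence into a block-diagonal determinant and produces the normalizations automatically. The price is the standard fact you invoke: the functionals $\E{\prod_i(1+f(x_i))}$ determine the law of a finite point process. For instance, take $f=\sum_j (t_j-1)\mathbf{1}_{A_j}$ with disjoint sets $A_j$. This recovers the joint generating functions of the counts, which is precisely the notion of equality in law for unordered point sets that the theorem asserts.
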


\subsection{Genus expansion}
    The computations presented here relies on the technique called genus expansion; this is a well-known technique in Random Matrix Theory, going back to the pioneering work of Wigner \cites{Wigner2, Wigner}, used in the context of several-matrix models \cites{Capitaine}, in free probability \cites{Nica, Redelmeier}, as well as in enumeration of maps \cites{Zvonkin, Bouttier}. An exposition of this method is given in \cite{DubachPeled}, for words in complex Ginibre matrices specifically. Let $k$ and $M$ be positive integers. For the current purpose, we define

\begin{equation}\label{def_TkM}
T_{k,M}(N) := \E{\tr(G^{kM})\overline{\tr(G^M)^k}},
\end{equation}

where $N$ is the size of the Complex Ginibre Matrix $G$. Using genus expansion, one can see that $T_{k,M}(N)$ is a polynomial expression in $N$, and that it matches exactly the generating function of the coefficients $\epsilon_g(k,M)$:

    \begin{proposition}\label{prop:TkM}
        For any positive integers $k$ and $M$,
        \begin{enumerate}[(i)]
            \item $T_{k,M}$ is a polynomial in $N$ and it can be written as
            \[\displaystyle\T{k}{M}{N} = \sum_{g=0}^{\lfloor k(M-1)/2\rfloor} \e_g(k,M) \cdot N^{k(M-1)+1-2g},\]
            \item $\T{k}{M}{0} = 0$,
            \item $T_{k,M}$ is a polynomial in $N$ and $\deg(T_{k,M}) = k(M-1) + 1$,
            \item $T_{k,M}$ is odd if $2 \mid k(M-1)$, and even otherwise.
        \end{enumerate}
    \end{proposition}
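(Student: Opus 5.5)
We prove Proposition~\ref{prop:TkM} by the standard Wick (genus) expansion of $\T{k}{M}{N}$, and read off (ii)--(iv) from the resulting formula.

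\textbf{Wick expansion.} We start from
\[
\tr(G^{kM})\,\overline{\tr(G^M)^k} = \sum G_{i_1i_2}G_{i_2i_3}\cdots G_{i_{kM}i_1}\prod_{r=1}^{k}\overline{G_{j^{(r)}_1j^{(r)}_2}\cdots G_{j^{(r)}_Mj^{(r)}_1}},
\]
summed over all indices in $[N]$. The entries are i.i.d.\ $\N_\C(0,1)$, so $\E{G_{ab}G_{cd}}=0$ and $\E{G_{ab}\overline{G_{cd}}}=\delta_{ac}\delta_{bd}$. By Wick's theorem, the expectation is therefore a sum over pairings that match each of the $kM$ unconjugated factors with one of the $kM$ conjugated factors. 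These are exactly the bipartite pairings between the edges of the $kM$-gon and those of the $k$ distinct $M$-gons; there are $(kM)!$ of them.

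\textbf{Contribution of one pairing.} Each pairing forces the index equalities $i_a=j_c$, $i_{a+1}=j_{c+1}$. We glue the edges of the polygons with opposite orientations, so the surface is orientable, as prescribed. The indices then label the vertices of the resulting map, and the pairing contributes $N^{V}$, where $V$ is the number of vertex classes.

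\textbf{Euler's formula.} The surface has $F=k+1$ faces and $E=kM$ edges, so
\[
V = 2-2g-F+E = k(M-1)+1-2g .
\]
Grouping pairings by genus gives (i). Since $\e_g(k,M)$ counts pairings, $T_{k,M}$ is a polynomial in $N$ with non-negative integer coefficients.

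\textbf{Parts (ii)--(iv).} These are read off from (i).
\begin{itemize}
\item \emph{Part (ii).} Every exponent satisfies $V\ge 1$: a closed connected surface has at least one vertex, and the formula gives $V=k(M-1)+1-2g\ge 1$ because $g\le k(M-1)/2$. Hence there is no constant term, and $\T{k}{M}{0}=0$.
\item \emph{Part (iii).} The degree is attained at $g=0$, so we must show $\e_0(k,M)>0$, i.e.\ exhibit a planar pairing. Glue the $r$-th block of $M$ consecutive sides of the $kM$-gon to the $r$-th $M$-gon, matching side $s$ of the block with side $M+1-s$ of the $M$-gon, i.e.\ in reversed order, as required for orientable gluing. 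This attaches each $M$-gon like a cap onto an arc of the big polygon's boundary. One checks that it produces $k(M-1)+1$ vertices: each cap identifies the $M+1$ boundary vertices of its arc with only two endpoint classes, apart from the interior vertices it creates. Hence the genus is $0$. Equivalently, one can compute directly that the leading coefficient of $N^{k(M-1)+1}$ is nonzero.
\item \emph{Part (iv).} All exponents $k(M-1)+1-2g$ have the same parity as $k(M-1)+1$. So $T_{k,M}$ is odd when $k(M-1)$ is even, and even otherwise.
\end{itemize}

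\textbf{Main obstacle.} The delicate step is the bookkeeping identifying the Wick contractions with orientable gluings, in particular the vertex count: one has to check that the index identifications correspond exactly to vertex classes of the glued surface. It also requires connectedness, which holds here because every small polygon is glued to the big one. With that in hand, Euler's formula does the rest.
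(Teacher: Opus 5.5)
Your proposal is correct and follows the paper's route: Wick's theorem identifies the terms with bipartite orientable gluings, Euler's formula gives the exponent $k(M-1)+1-2g\ge 1$, and parity settles (iv). Your explicit planar gluing makes precise the paper's ``by inspection'' claim that $\e_0(k,M)\neq 0$: capping each block of $M$ consecutive sides gives $k(M-1)$ interior vertex classes plus one class containing all the block endpoints, which is the clean form of the count your vertex-counting sentence is reaching for.
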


    \begin{proof}
        This is a particular case of genus expansion for words in complex Ginibre matrices, as explained in the paper \cite{DubachPeled}. In this case the words are simply powers of one Ginibre matrix of size $N$. Thus, (i) is given directly by the genus expansion formula, the proof of which can be described in the following way: one writes down the expansion of the terms $\tr G^{kM}$ and $(\tr G^M)^k$ and then apply Wick's formula, transforming the expectation in \eqref{def_TkM} into a sum over pairings that has exactly the same structure as a bipartite pairing counted by the numbers $\epsilon_g(k,M)$.
        Properties (ii) and (iii) are then immediate consequences of this construction, as the exponent $k(M-1)+1-2g$ is the number of vertices on the resulting graph (after identifications), and this number is clearly an integer between $1$ and $k(M-1)+1$. 
        We also notice that $\e_0(k,M) \neq 0$ to prove the third item, as by inspection there is always at least one way to form a sphere from one $kM$-gon and $k$ separate $M$-gon (in fact these numbers are the Fuss-Catalan numbers \eqref{Fuss_Catalan}). For property (iv), we notice that all powers of $N$ are of the same parity as $k(M-1)+1$, thus the polynomial $T_{k,M}$ is either even or odd, depending on the values of $k$ and $M$.
    \end{proof}

\section{Bipartite formula: the first cases}\label{sec_first_cases}

We first provide a detailed computation of the first cases ($k=1,2,3,4$) of our main result. The common strategy to all cases is to evaluate the values $T_{k,M}(N)$ at more than $\deg(T_{k,M}) = k(M-1)+1$ values, in order to fully characterize the polynomial $T_{k,M}$; this requires a detailed analysis as $k$ grows larger, but for small values of $k$ the computation is simplified. The case $k=1$ was solved in \cite{DubachCycles}, which was the starting point of this project and of the bachelor thesis \cite{Mai2026}. It turns out that the second case ($k=2$) can be solved with very little extra work. The next cases $k=3$ and $k=4$ can also be solved by a similar computation, with an extra ingredient, namely the block decomposition \ref{thm:blocks}, thus opening the way to the general proof presented in section \ref{sec_main_result}.

\subsection{Case $k=1$} We consider bipartite pairings of two polygons of size $M$, as represented on Figure \ref{fig:case_M_1}.

    \begin{figure}[ht]
        \centering
        \includegraphics[width=6cm]{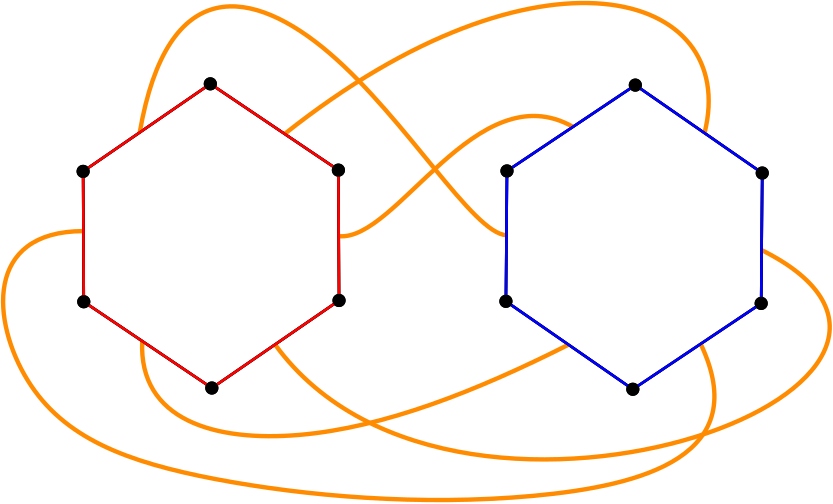}
        \caption{Example of a bipartite pairing of the sides of two hexagons i.e. $k=1$, $M=6$.}
        \label{fig:case_M_1}
    \end{figure}

It was noticed in \cite{DubachCycles} (although with a different goal and specific notations) that one can evaluate the polynomial $T_{1,M}$ directly from the distribution of high powers (Theorem \ref{thm:hkpv}) and square radii (Theorem \ref{thm:kostlan}). We give the details of this computation for the convenience of the reader.

    \begin{proposition}\label{prop:1M}
        For $0 \leq N \leq M$, we have the following equality
        \beq
        T_{1,M}(N) = \frac{1}{M+1}\Fp{M+1}{N}.
        \eeq
    \end{proposition}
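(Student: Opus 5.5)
The plan is to compute $T_{1,M}(N)=\E{\tr(G^M)\overline{\tr(G^M)}}=\E{\left|\sum_{i=1}^N \l_i^M\right|^2}$ exactly, using that $N\le M$. The case $N=0$ is trivial: both sides vanish, by Proposition \ref{prop:TkM}(ii) and since $F_{M+1}^+(0)=0$. So we may assume $1\le N\le M$. In this range Theorem \ref{thm:hkpv} applies, and the multiset $\{\l_1^M,\ldots,\l_N^M\}$ has the same law as $\{\g_j^{M/2}e^{i\th_j}\}_{1\le j\le N}$, with the $\g_j\sim\Gamma(j)$ and the $\th_j$ i.i.d. uniform phases, all independent. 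Since $|\sum_i \l_i^M|^2$ is a symmetric function of the eigenvalues, it depends only on this multiset, so we may substitute the representation directly.

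Next we expand the square modulus:
\[\left|\sum_{j=1}^N \g_j^{M/2}e^{i\th_j}\right|^2=\sum_{j}\g_j^M+\sum_{j\neq l}\g_j^{M/2}\g_l^{M/2}e^{i(\th_j-\th_l)}.\]
By independence and $\E{e^{i\th_j}}=0$, every cross term has zero expectation. Hence $T_{1,M}(N)=\sum_{j=1}^N\E{\g_j^M}$. Lemma \ref{lem:gamma} then gives $\E{\g_j^M}=\G(M+j)/\G(j)$, so that
\[T_{1,M}(N)=\sum_{j=1}^N\frac{\G(M+j)}{\G(j)}.\]

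Finally, Lemma \ref{lem:rising-sum} with $n=M+1$ and $k=N$ gives $F_{M+1}^+(N)=(M+1)\sum_{j=1}^N\G(M+j)/\G(j)$, which is exactly $(M+1)T_{1,M}(N)$. This concludes the proof.

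There is no real obstacle here; the only point needing care is the legitimacy of the substitution. It requires the hypothesis $N\le M$, so that Theorem \ref{thm:hkpv} applies, and the fact that the quantity is a symmetric function, so that equality in distribution of the unordered sets suffices. As a remark for later use, both sides are polynomials in $N$: the left side by Proposition \ref{prop:TkM}, the right side of degree $M+1$. Since they agree at the $M+1$ points $N=0,\ldots,M$ and the degree of $T_{1,M}$ is $M$, the identity should extend to all $N$ once one checks the leading behaviour. That extension, however, is not needed for the statement as given.
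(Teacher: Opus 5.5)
Your proof is correct and is essentially the paper's argument: substitute the high-power representation of Theorem \ref{thm:hkpv}, use the uniform phases to kill the cross terms, use the Gamma moments, and sum with Lemma \ref{lem:rising-sum}. One caution about your closing aside: the identity does not extend to all $N$, since the two sides have degrees $M$ and $M+1$; instead, $\frac{1}{M+1}F_{M+1}^+(N)-T_{1,M}(N)$ vanishes at $0,\ldots,M$ and hence equals $\frac{1}{M+1}F_{M+1}^-(N)$, which is how the paper identifies $T_{1,M}$.
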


    \begin{proof}
        Going back to the definition \eqref{def_TkM}, we find
        \[T_{1,M}(N) = \E{\tr(G^{M}) \bar{\tr(G^M)}} = \E{\sum_{i,j = 1}^N \l_i^{M} \bar{\l_{j}^M}} = \E{\sum_{i,j= 1}^N z_i \overline{z_{j}}},\]
        where $(z_j)_{j=1}^{N}$ are the variables from Theorem \ref{thm:hkpv} corresponding to the high powers $\l_j^M$ with $M\geq N$. Using the independence of these variables, we find that the contribution is non-zero only if $j=i$; therefore by Kostlan's theorem \ref{thm:kostlan}, Lemma \ref{lem:gamma} and finally Lemma \ref{lem:rising-sum}, we find that
        \[T_{1,M}(N) = \sum_{i=1}^N\E{|z_i|^{2}} = \sum_{i=1}^N \frac{\G(M+i)}{\G(i)} = \frac{1}{M+1}\Fp{M+1}{N},\]
        which was the claim.
    \end{proof}

    Proposition \ref{prop:1M} allows us to identify the polynomial $T_{1,M}$.

    \begin{proposition}
        Let $M$ be a positive integer. Then we have that for every integer $N$,
        \beq
        \T{1}{M}{N} = \frac{1}{M+1}[F_{M+1}^+(N) -F_{M+1}^-(N)] = \frac{1}{M+1}\left[\Fp{M+1}{N} - \Fp{M+1}{N-M}\right].
        \eeq
    \end{proposition}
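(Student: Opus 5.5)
The plan is polynomial interpolation. Proposition \ref{prop:TkM} shows that $T_{1,M}$ is a polynomial in $N$ of degree exactly $M$. The candidate
\[P(N) := \frac{1}{M+1}\left[\Fp{M+1}{N} - \Fp{M+1}{N-M}\right]\]
is a polynomial of degree at most $M$. Indeed, both rising factorials are monic of degree $M+1$, so their leading terms cancel in the difference. It therefore suffices to check that $T_{1,M}$ and $P$ agree at $M+1$ distinct integers. The natural choice is $N=0,1,\ldots,M$, which is exactly the range covered by Proposition \ref{prop:1M}.

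For $0\le N\le M$, Proposition \ref{prop:1M} gives $T_{1,M}(N) = \frac{1}{M+1}\Fp{M+1}{N}$. It remains to see that $\Fp{M+1}{N-M}$ vanishes for these $N$. By definition,
\[\Fp{M+1}{N-M} = (N-M)(N-M+1)\cdots(N),\]
a product of $M+1$ consecutive integers running from $N-M$ to $N$. When $0\le N\le M$ this range contains $0$, so the product vanishes. Hence $P(N)=\frac{1}{M+1}\Fp{M+1}{N}=T_{1,M}(N)$ at all $M+1$ points $N=0,\dots,M$. Since both polynomials have degree at most $M$, they are identical, which gives the second expression for every integer $N$.

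For the first expression, I would use $F_n^-(X)=(-1)^nF_n^+(-X)$ and simply reorder the factors:
\[\Fp{M+1}{N-M}=\prod_{j=0}^{M}(N-M+j)=\prod_{i=0}^{M}(N-i)=F_{M+1}^-(N).\]
This shows the two displayed forms coincide.

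There is no real obstacle here. The only points needing care are the degree count, namely the cancellation of the leading monic terms so that $\deg P\le M$, and the fact that the interpolation nodes include $N=0$. At $N=0$ both sides are zero, consistent with Proposition \ref{prop:TkM}(ii), so the node is legitimate even though Proposition \ref{prop:1M} is stated for matrix sizes.
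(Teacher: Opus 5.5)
Your proof is correct and is essentially the paper's argument: both interpolate $T_{1,M}$ at the $M+1$ nodes $N=0,\dots,M$, using Proposition~\ref{prop:1M} for the values there and Proposition~\ref{prop:TkM} for the degree. The only difference is cosmetic. The paper writes the degree-$(M+1)$ difference $\frac{1}{M+1}F_{M+1}^+(N)-T_{1,M}(N)$ as $C\,F_{M+1}^-(N)$ and fixes $C$ by comparing leading coefficients, whereas you check the candidate directly, noting that its leading terms cancel so it has degree at most $M$ and agrees with $T_{1,M}$ at $M+1$ points.
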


    \begin{proof}
        From propositions \ref{prop:1M} and \ref{prop:TkM} we have that the polynomial
        \[\frac{1}{M+1}\cdot \Fp{M+1}{N} - \T{1}{M}{N}\]
        has $M+1$ zeros $\{0,1,\ldots,M\}$, and it is of degree $M+1$, hence
         \[\frac{1}{M+1}\cdot \Fp{M+1}{N} - \T{1}{M}{N} = C \cdot N(N-1)\cdots(N-M) = C\cdot \Fm{M+1}{N}.\]
        By inspection of the leading coefficient, we find the constant $C=\frac{1}{M+1}$ and conclude that
        $$
\T{1}{M}{N} = \frac{1}{M+1}[F_{M+1}^+(N) -F_{M+1}^-(N)].
        $$
        The second part of the claim comes from the correspondence between rising and falling factorial:
        $$
        F_{M+1}^-(X) = F_{M+1}^+(X-M).
        $$
    \end{proof}

This settles the case $k=1$. At that point, let us notice that we can generalize Proposition \ref{prop:1M} to every $k$ and find that the values of $T_{k,M}$ in the integer interval $[\![1,M]\!]$ are exactly given by those of a rising factorial. However, this fact alone is not sufficient to characterize the polynomial $T_{k,M}$ in general.

\begin{proposition}\label{prop:kM_1M}
        For any $k \geq 1$ and $0 \leq N \leq M$, we have the following equality
        \[T_{k,M}(N) = \frac{1}{kM+1}\Fp{kM+1}{N}.\]
    \end{proposition}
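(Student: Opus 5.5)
For $N \le M$ we have $kM \ge N$, so the plan is to apply Theorem \ref{thm:hkpv} twice: with exponent $kM$ to describe $\tr(G^{kM})$, and with exponent $M$ to describe $\tr(G^M)$. The same eigenvalues appear in both traces, so we must use a single coupling. Conditionally on the moduli, the eigenvalues $\l_j$ are ordered so that $|\l_j|^2=\g_j$, and their arguments $\phi_j$ are i.i.d. uniform on $[0,2\pi]$. We then check that $M\phi_j$ and $kM\phi_j$ are consistent with the uniform angles of Theorem \ref{thm:hkpv}. If this direct coupling of all arguments is not available, we use the content of the proof of Theorem \ref{thm:hkpv}: since $M \ge N$, every monomial $\prod_j \l_j^{a_j}\bar{\l_j}^{b_j}$ with all $a_j-b_j$ multiples of $M$ has zero expectation unless $a_j=b_j$ for each $j$ (in the ordering from Kostlan's theorem). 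This rotational invariance at the level of the product angles is what the theorem encodes.

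Write $\l_j=\sqrt{\g_j}\,e^{i\phi_j}$ and expand
\[\tr(G^{kM})\,\bar{\tr(G^M)^k}=\sum_{i}\sum_{j_1,\dots,j_k}\l_i^{kM}\,\bar{\l_{j_1}^M\cdots\l_{j_k}^M}.\]
The angular factor of a term is $\exp\big(iM(k\phi_i-\phi_{j_1}-\cdots-\phi_{j_k})\big)$, and we take its expectation over independent uniform angles. The term survives only if the net exponent of each $e^{iM\phi_j}$ vanishes, that is, only if $j_1=\cdots=j_k=i$. Every other choice leaves some index $j$ with a nonzero multiple of $M\phi_j$, and this averages to zero. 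Only the $N$ diagonal terms remain, and they contribute
\[T_{k,M}(N)=\sum_{i=1}^N \E{|\l_i|^{2kM}}=\sum_{i=1}^N\E{\g_i^{kM}}=\sum_{i=1}^N\frac{\G(kM+i)}{\G(i)}\]
by Theorem \ref{thm:kostlan} and Lemma \ref{lem:gamma}. Lemma \ref{lem:rising-sum} with $n=kM+1$ then gives $\frac{1}{kM+1}\Fp{kM+1}{N}$. The case $N=0$ is trivial by Proposition \ref{prop:TkM}(ii), since $\Fp{kM+1}{0}=0$.

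The main obstacle is justifying the decoupling of the angles. Theorem \ref{thm:hkpv} gives the joint law of the set $\{\l_j^M\}$, and hence of $\tr(G^M)$. The quantity $\tr(G^{kM})=\sum_j(\l_j^M)^k$ is a function of that same set, so it suffices to apply the theorem once, with exponent $M\ge N$. Then $\l_j^M=\g_j^{M/2}e^{i\th_j}$ and $\l_j^{kM}=\g_j^{kM/2}e^{ik\th_j}$, and everything above is computed with the $\th_j$ in place of $M\phi_j$. This removes the need for any joint coupling, and the remaining steps are the routine ones given above.
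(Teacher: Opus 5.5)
Your final argument is correct and is exactly the paper's proof. You apply Theorem~\ref{thm:hkpv} once with exponent $M \geq N$, note that $\tr(G^{kM})=\sum_j(\l_j^M)^k$ is a function of the same set, keep only the diagonal terms $j_1=\cdots=j_k=i$, and conclude with Theorem~\ref{thm:kostlan}, Lemma~\ref{lem:gamma} and Lemma~\ref{lem:rising-sum}. Your preliminary claim that the arguments of the $\l_j$ are i.i.d.\ uniform conditionally on the moduli is false, since the factor $\prod_{i<j}|\l_i-\l_j|^2$ couples the angles; you are right to discard it, because only the independence of the $\th_j$ in Theorem~\ref{thm:hkpv} is needed.
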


    \begin{proof}
        Similarly to the proof of Proposition \ref{prop:1M}, we notice
        \[T_{k,M}(N) = \E{\tr(G^{kM}) \bar{\tr(G^M)^k}} = \E{\sum_{i,j_1,\ldots,j_k = 1}^N \l_i^{kM} \bar{\l_{j_1}^M}\cdots\bar{\l_{j_k}^M}} = \E{\sum_{i,j_1,\ldots,j_k = 1}^N z_i^k \overline{z_{j_1}}\ldots \overline{z_{j_k}}},\]
        where $(z_n)_n$ are the independent variables from Theorem \ref{thm:hkpv}. It follows from the fact that these variables are independent with uniform argument that the only possible non-zero contributions to the sum are when $j_1 = \cdots = j_k = i$. Therefore by Kostlan's theorem \ref{thm:kostlan}, Lemma \ref{lem:gamma} and Lemma \ref{lem:rising-sum}, we conclude that
        \[T_{k,M}(N) = \sum_{i=1}^N\E{|z_i|^{2k}} = \sum_{i=1}^N \frac{\G(kM+i)}{\G(i)} = \frac{1}{kM+1}\Fp{kM+1}{N}.\]
    \end{proof}

\subsection{Case $k=2$}
    We now consider bipartite pairings of the edges of a $2M$-gon with two separate $M$-gons, as represented on Figure \ref{fig:case_M_2}. We observe that in this case the polynomial $T_{2,M}$ can be determined directly from proposition \ref{prop:kM_1M}, together with a parity argument.

    \begin{figure}[h!]
        \centering
        \includegraphics[width=6cm]{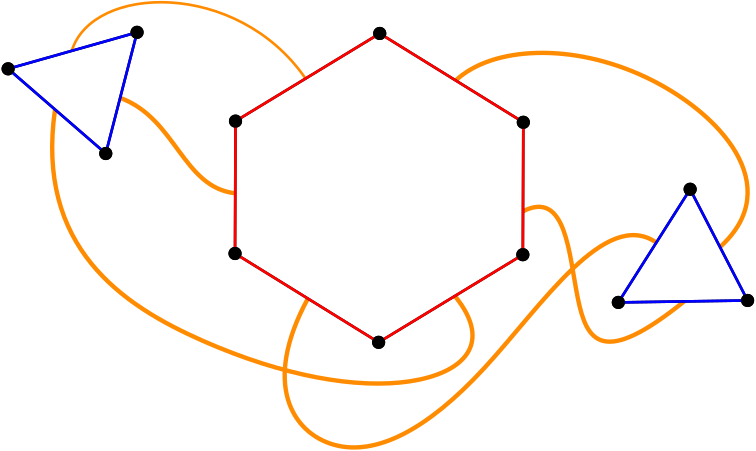}
        \caption{Example of a bipartite pairing of the sides of a hexagon and  two triangles i.e. $k=2$, $M=3$.}
        \label{fig:case_M_2}
    \end{figure}

    \begin{proposition}\label{prop:-MM}
        For $-M \leq N \leq M$ we have the following equality
        \[T_{2,M}(N) = \frac{1}{2M+1}\left[F_{2M+1}^+(N)+\Fm{2M+1}{N}\right]
        = \frac{1}{2M+1}\left[F_{2M+1}^+(N)+\Fp{2M+1}{N-2M}\right].\]
    \end{proposition}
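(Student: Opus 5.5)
The plan is to split the range $-M\le N\le M$ into $0\le N\le M$ and $-M\le N<0$. On the first range we use Proposition \ref{prop:kM_1M} with $k=2$. For the second range we use the parity statement of Proposition \ref{prop:TkM}(iv) to reflect back onto the first.

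\emph{Nonnegative range.} For $0\le N\le M$, Proposition \ref{prop:kM_1M} gives $T_{2,M}(N)=\frac{1}{2M+1}F^+_{2M+1}(N)$. The extra term $F^-_{2M+1}(N)=N(N-1)\cdots(N-2M)$ vanishes for every integer $N\in[\![0,2M]\!]$, in particular for $0\le N\le M$. So the claimed identity holds on this range.

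\emph{Negative range.} Here $k(M-1)=2(M-1)$ is even, so Proposition \ref{prop:TkM}(iv) says $T_{2,M}$ is odd. Hence for $-M\le N<0$ we have $T_{2,M}(N)=-T_{2,M}(-N)$ with $0<-N\le M$, and therefore
\[T_{2,M}(N)=-\tfrac{1}{2M+1}F^+_{2M+1}(-N).\]
By Definition \ref{def:risfalfact}, $F^+_{2M+1}(-N)=(-1)^{2M+1}F^-_{2M+1}(N)=-F^-_{2M+1}(N)$. This gives $T_{2,M}(N)=\frac{1}{2M+1}F^-_{2M+1}(N)$.

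It remains to check that $F^+_{2M+1}(N)=N(N+1)\cdots(N+2M)$ vanishes for integers $-M\le N<0$. It does, since the factor $N+(-N)=0$ appears among the factors $N+j$ with $0\le j\le 2M$. So the formula holds on the negative range as well.

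\emph{Second expression.} The rewriting in terms of rising factorials is the identity $F^-_{n}(X)=F^+_{n}(X-n+1)$ with $n=2M+1$. To see it, note that both sides are the product of the $2M+1$ consecutive factors $X-2M,\dots,X$.

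I expect no step to be a real obstacle. The only points needing care are the sign bookkeeping in the oddness step and confirming that each factorial vanishes on the complementary half of the interval.
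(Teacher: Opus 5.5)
Your proof is correct and follows essentially the same route as the paper: Proposition~\ref{prop:kM_1M} on $[\![0,M]\!]$, then the oddness of $T_{2,M}$ from Proposition~\ref{prop:TkM}(iv) combined with $F^+_{2M+1}(-N)=-F^-_{2M+1}(N)$ to transfer the values to $[\![-M,-1]\!]$. You also spell out explicitly why the complementary factorial vanishes on each half-interval, a step the paper leaves implicit when it says the values ``match with the claim.''
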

    
    \begin{proof} The values $T_{2,M}(N)$ for $N \in [\![0,M]\!]$ are given by Proposition \ref{prop:kM_1M} and match with the claim.
        As $\deg(T_{2,M}) = 2M-1$ and $T_{2,M}$ is odd from Proposition \ref{prop:TkM}, then for $-M \leq N \leq -1$:
        \[(-1)^{2M-1} T_{2,M}(N) = T_{2,M}(-N) \stackrel{\text{Prop. \ref{prop:kM_1M}}}{=} \frac{1}{2M+1}\Fp{2M+1}{-N} = \frac{-1}{2M+1}\Fm{2M+1}{N}.\]
        Hence $T_{2,M}(N) = \frac{1}{2M+1}\Fm{2M+1}{N}$ for $-M \leq N \leq -1$. We have thus verified that
        \[T_{2,M}(N) = \frac{1}{2M+1}\left[F_{2M+1}^+(N)+\Fm{2M+1}{N}\right]\]
        holds for $-M \leq N \leq M$.
    \end{proof}
    
    Note that the polynomial $T_{2,M}$ has degree $2M-1$ and that by Proposition \ref{prop:-MM} we can compute $2M+1$ of its values. Thus, we can identify $T_{2,M}$.

    \begin{proposition}
        Let $M$ be a positive integer. Then, for every $N$,
        \begin{align*}
            \T{2}{M}{N} 
        &= \frac{1}{2M+1}\left[\Fp{2M+1}{N} + \Fm{2M+1}{N} - 2\Fp{2M+1}{N-M}\right] \\
        &= \frac{1}{2M+1}\left[\Fp{2M+1}{N} - 2\Fp{2M+1}{N-M} + \Fp{2M+1}{N-2M}\right].
        \end{align*}
    \end{proposition}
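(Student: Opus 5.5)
Since $T_{2,M}$ has degree $2M-1$ by Proposition \ref{prop:TkM}, and Proposition \ref{prop:-MM} already gives its values at the $2M+1$ integers $-M\le N\le M$, the plan is to write down a candidate polynomial $Q$ agreeing with $\frac{1}{2M+1}[F^+_{2M+1}+F^-_{2M+1}]$ at these points and of degree at most $2M-1$. Uniqueness of interpolation then forces $T_{2,M}=Q$. The natural correction is a multiple of the polynomial vanishing at all of $-M,\dots,M$, which is exactly $\Fp{2M+1}{N-M}=(N-M)(N-M+1)\cdots(N+M)$. So I would set
\[P(N):=\frac{1}{2M+1}\left[\Fp{2M+1}{N}+\Fm{2M+1}{N}\right]-T_{2,M}(N).\]
This $P$ has degree at most $2M+1$ and vanishes on $[\![-M,M]\!]$, which gives $2M+1$ roots. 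Hence $P(N)=C\cdot\Fp{2M+1}{N-M}$ for some constant $C$.

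To identify $C$, compare the coefficients of $N^{2M+1}$. In $\Fp{2M+1}{N}+\Fm{2M+1}{N}$ the leading coefficient is $2$, while $T_{2,M}$ has degree $2M-1$ and contributes nothing. So $C=\frac{2}{2M+1}$, which gives the first line of the claim.

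As a sanity check, the $N^{2M}$ coefficient must also cancel. It equals $\sfir{2M+1}{2M}+s(2M+1,2M)=0$ in the bracket, and it is $0$ in $\Fp{2M+1}{N-M}$ because that polynomial is odd, being symmetric about $0$. The second line then follows from $\Fm{2M+1}{N}=\Fp{2M+1}{N-2M}$, exactly as in the $k=1$ case.

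There is no real obstacle here; the only point needing care is recognizing $\Fp{2M+1}{N-M}$ as the vanishing polynomial on $[\![-M,M]\!]$ and getting the leading-coefficient bookkeeping right.
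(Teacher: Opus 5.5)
Your proposal is correct and follows the paper's proof exactly. The difference $\frac{1}{2M+1}\left[\Fp{2M+1}{N}+\Fm{2M+1}{N}\right]-T_{2,M}(N)$ vanishes on $[\![-M,M]\!]$ and so equals $C\,\Fp{2M+1}{N-M}$; comparing $N^{2M+1}$ coefficients then gives $C=\frac{2}{2M+1}$, and your $N^{2M}$ check is a harmless consistency verification beyond what is needed.
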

    
    \begin{proof}
        From Proposition \ref{prop:-MM} we know that the polynomial
        \[\frac{1}{2M+1}[\Fp{2M+1}{N} + \Fm{2M+1}{N}] - \T{2}{M}{N},\]
         has degree $2M+1$ with zeros at $\{-M,-M+1,\ldots,-1,0,1,\ldots,M\}$. Thus it is equal to
        \[C\cdot (N+M)\cdots(N+1) N (N-1)\cdots(N-M)
        =
        C F_{2M+1}^+ (N-M).\]
        By inspection of the leading coefficient, we deduce $C=\frac{2}{2M+1}$ and conclude.
    \end{proof}

    Note that the parity argument generalizes to all values of $k$, yielding the values of $T_{k,M}$ in the integer interval $[\![-M,M]\!]$.

    \begin{proposition}\label{prop:kM_-MM}
        For any $k \geq 1$ and $-M \leq N \leq M$, we have the following equality
        \begin{align*}
            T_{k,M}(N) 
            &= \frac{1}{kM+1}\left[F_{kM+1}^+(N)+(-1)^k\Fm{kM+1}{N}\right] \\
            &= \frac{1}{kM+1}\left[F_{kM+1}^+(N)+(-1)^k\Fp{kM+1}{N-kM}\right]
        \end{align*}
    \end{proposition}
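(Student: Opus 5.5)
We split the integer interval $[\![-M,M]\!]$ into the two halves $[\![0,M]\!]$ and $[\![-M,-1]\!]$, exactly as in the proof of Proposition \ref{prop:-MM}, and check that the right-hand side agrees with $T_{k,M}(N)$ on each of them.

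\emph{Nonnegative half.} For $0 \leq N \leq M$, Proposition \ref{prop:kM_1M} gives $T_{k,M}(N) = \frac{1}{kM+1}\Fp{kM+1}{N}$. The extra term vanishes there, since
\[\Fm{kM+1}{N} = N(N-1)\cdots(N-kM)\]
contains the factor $(N-N)=0$ whenever $0 \leq N \leq kM$, and in particular for $0\le N\le M$. So the claimed formula holds on $[\![0,M]\!]$.

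\emph{Negative half.} For $-M \leq N \leq -1$ we use the parity property (iv) of Proposition \ref{prop:TkM}. It says that $T_{k,M}$ is odd or even according to the parity of $k(M-1)+1$, i.e. $T_{k,M}(-N) = (-1)^{k(M-1)+1}T_{k,M}(N)$. Since $-N\in[\![1,M]\!]$, Proposition \ref{prop:kM_1M} applies and gives
\[T_{k,M}(N) = (-1)^{k(M-1)+1}\frac{1}{kM+1}\Fp{kM+1}{-N}.\]
Definition \ref{def:risfalfact} gives $\Fp{kM+1}{-N} = (-1)^{kM+1}\Fm{kM+1}{N}$. The total sign is then
\[(-1)^{k(M-1)+1+kM+1} = (-1)^{2kM-k+2} = (-1)^k.\]
On this range $\Fp{kM+1}{N} = N(N+1)\cdots(N+kM)$ vanishes, because it contains the factor $N+|N|=0$ when $1\le |N|\le M\le kM$. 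Hence the right-hand side reduces to $\frac{(-1)^k}{kM+1}\Fm{kM+1}{N}$, which matches.

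\emph{Second form.} The second expression follows from the identity $\Fm{n}{X} = \Fp{n}{X-n+1}$ with $n = kM+1$, which gives $\Fm{kM+1}{N}=\Fp{kM+1}{N-kM}$, as already used for $k=1,2$.

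The argument is routine. The only delicate point is the sign bookkeeping, namely checking that the parity exponent $k(M-1)+1$ combined with the factorial-reflection sign $kM+1$ yields exactly $(-1)^k$. Beyond that, one only needs the two vanishing ranges of the factorials so that each half-interval picks up just one term.
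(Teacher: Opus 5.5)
Your proof is correct and follows essentially the paper's argument. On $[\![0,M]\!]$ you use Proposition~\ref{prop:kM_1M}, and on $[\![-M,-1]\!]$ you combine parity (iv) of Proposition~\ref{prop:TkM} with $F_n^+(-X)=(-1)^nF_n^-(X)$, giving the sign $(-1)^{k(M-1)+1+kM+1}=(-1)^k$; you also make explicit why the complementary factorial term vanishes on each half-interval, which the paper leaves implicit.
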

    \begin{proof}
        As $\deg(T_{k,M}) = k(M-1)+1$ and $T_{k,M}$ is either even or odd depending on $k,M$, then for $-M \leq N \leq -1$:
        \[(-1)^{k(M-1)+1} T_{k,M}(N) = T_{k,M}(-N) \stackrel{\text{Prop. \ref{prop:kM_1M}}}{=} \frac{1}{kM+1}\Fp{kM+1}{-N} = \frac{(-1)^{kM+1}}{kM+1}\Fm{kM+1}{N}.\]
        Hence $T_{k,M}(N) = \frac{(-1)^k}{kM+1}\Fm{kM+1}{N}$ for $-M \leq N \leq -1$. Then we notice that
        \[T_{k,M}(N) = \frac{1}{kM+1}\left[F_{kM+1}^+(N)+(-1)^k\Fm{kM+1}{N}\right]\]
        holds for $-M \leq N \leq M$ by proposition \ref{prop:kM_1M}.
    \end{proof}

\vspace{.2in}

\subsection{Case $k=3$}
    We now consider bipartite pairings of the edges of a $3M$-gon and three smaller $M$-gons, as represented on Figure \ref{fig:case_M_3}.

    \begin{figure}[ht]
        \centering
        \includegraphics[width=9cm]{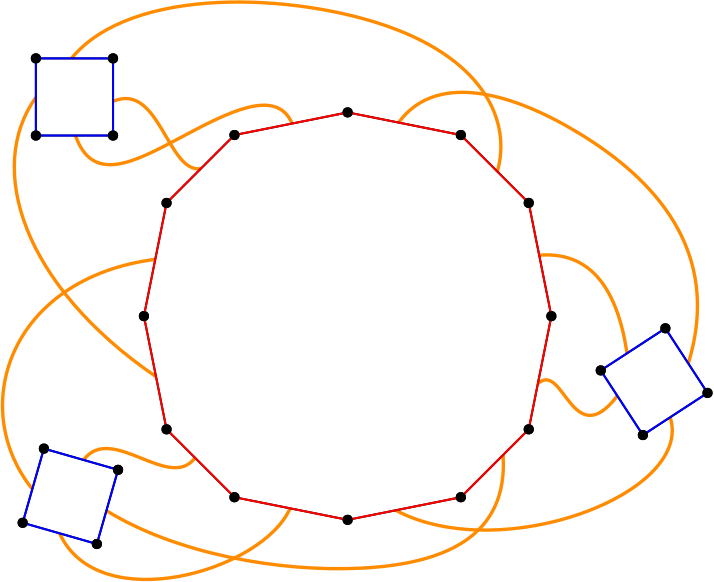}
        \caption{Example of a bipartite pairing of the sides of a 12-gon and  three squares i.e. $k=3$, $M=4$.}
        \label{fig:case_M_3}
    \end{figure}

    The main motivation of presenting the cases $k=3,4$ in detail is to showcase the core idea of the proof of the main result, which relies on Theorem \ref{thm:blocks} on the distribution of low powers of Ginibre eigenvalues. Using this theorem, we have the following.
 
    \begin{proposition}\label{prop:M2M}
        For $M < N \leq 2M$ we have the following equality
        \[T_{3,M}(N) = \frac{1}{3M+1}\Fp{3M+1}{N} - \frac{3}{3M+1} \Fp{3M+1}{N-M}\]
    \end{proposition}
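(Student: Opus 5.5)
The plan is to compute $T_{3,M}(N)$ directly from the block decomposition (Theorem \ref{thm:blocks}), in the same way that Proposition \ref{prop:kM_1M} used Theorem \ref{thm:hkpv}.

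\textbf{Step 1: the block structure.} For $M<N\le 2M$, the sets $I_p$ have a simple form:
\begin{itemize}
\item for $1\le p\le N-M$, $I_p=\{p,p+M\}$ contains two points;
\item for $N-M<p\le M$, $I_p=\{p\}$ is a singleton.
\end{itemize}
Write
\[T_{3,M}(N)=\E{\sum_{i,j_1,j_2,j_3} z_i^3\,\overline{z_{j_1}z_{j_2}z_{j_3}}}.\]
Each block is independent of the others, and its density is invariant under a common rotation $z\mapsto e^{i\phi}z$ of all points in the block. So the expectation of a monomial vanishes unless, within every block, the holomorphic and antiholomorphic degrees agree. Hence $j_1,j_2,j_3$ must all lie in the block of $i$, and $T_{3,M}(N)$ splits as a sum over blocks.

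\textbf{Step 2: singleton blocks.} For a singleton block $\{p\}$, the density shows that $|z_p|^{2/M}$ is Gamma$(p)$ distributed. By Lemma \ref{lem:gamma}, its contribution is $\E{|z_p|^6}=\Gamma(3M+p)/\Gamma(p)$.

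\textbf{Step 3: two-point blocks.} For a block $\{a,b\}=\{p,p+M\}$, I need
\[\E{(z_a^3+z_b^3)\,\overline{(z_a+z_b)^3}}.\]
This is a two-particle ensemble with weight $w(z)=|z|^{2(p-M)/M}e^{-|z|^{2/M}}$, for which monomials are orthogonal. Their norms are
\[h_m=\int |z|^{2m}w(z)\,\dd m(z)=\pi M\,\Gamma(mM+p).\]
The normalisation is then $2h_0h_1$, matching $Z_{N,M,p}$. The key trick is to absorb the Vandermonde factor $|z_b-z_a|^2$ into both polynomials:
\begin{align*}
(z_a^3+z_b^3)(z_b-z_a)&=z_b^4-z_a^4+z_a^3z_b-z_az_b^3,\\
(z_a+z_b)^3(z_b-z_a)&=z_b^4-z_a^4+2z_az_b^3-2z_a^3z_b.
\end{align*}
By orthogonality, the integral of the product of the first with the conjugate of the second is $\sum c_{m,n}\overline{c'_{m,n}}h_mh_n=2h_0h_4-4h_1h_3$. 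The block contribution is therefore
\[\frac{h_4}{h_1}-2\frac{h_3}{h_0}=\frac{\Gamma(3M+(p+M))}{\Gamma(p+M)}-2\,\frac{\Gamma(3M+p)}{\Gamma(p)}.\]

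\textbf{Step 4: summing up.} Each index $i\in\{1,\dots,N\}$ receives $\Gamma(3M+i)/\Gamma(i)$: from singletons, from the first term of Step 3 (indices $p+M$), or from the block structure for $i\le N-M$. Rewriting $-2=1-3$, the indices $i\le N-M$ carry an extra $-3\,\Gamma(3M+i)/\Gamma(i)$. Hence
\[T_{3,M}(N)=\sum_{i=1}^N\frac{\Gamma(3M+i)}{\Gamma(i)}-3\sum_{i=1}^{N-M}\frac{\Gamma(3M+i)}{\Gamma(i)},\]
and Lemma \ref{lem:rising-sum} turns both sums into $\frac{1}{3M+1}F^+_{3M+1}$, evaluated at $N$ and at $N-M$ respectively. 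This gives the claim.

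The only real obstacle is Step 3: getting the normalisation $2h_0h_1$ consistent with $Z_{N,M,p}$, and checking the signs of the mixed terms.
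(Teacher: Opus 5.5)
Your proof is correct and follows the paper's route: the block decomposition, rotation invariance to keep only within-block terms, Gamma moments for singleton blocks, and Lemma \ref{lem:rising-sum} to assemble the two sums. The only difference is in the two-point blocks. You absorb the Vandermonde factor $(z_b-z_a)$ into both polynomials and use orthogonality of the monomials $z_a^m z_b^n$. The paper instead expands $|z_i-z_j|^2$ and tallies the diagonal terms $\E{|z_i|^6}$ and the three off-diagonal terms $\E{|z_i|^4 z_i\bar{z_j}}$ separately. Both give the same block total $\frac{\G(4M+p)}{\G(M+p)}-2\frac{\G(3M+p)}{\G(p)}$.
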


    \begin{proof}
        For $M < N \leq 2M$, from Theorem \ref{thm:blocks} we have $|I_p| \in\{1,2\}$ for all $1 \leq p \leq M$. We also have
        \[T_{3,M}(N) = \E{\sum_{i,j_1,j_2,j_3 = 1}^N \l_i^{kM} \bar{\l_{j_1}^M}\bar{\l_{j_2}^M}\bar{\l_{j_3}^M}} = \E{\sum_{i,j_1,j_2,j_3 = 1}^N z_i^3\bar{z_{j_1}}\bar{z_{j_2}}\bar{z_{j_3}}},\]
        still with the notations of Theorem \ref{thm:blocks}. Now we consider two possible cases: either the index $i$ is in a block of size one, or it is in a block of size two. \\
        
    \noindent \textbf{First case: $i \in I_p$ where $|I_p| = 1$, i.e. $I_p = \{p\}$.} Then $z_i$ is independent from other $z_j$'s, thus we only need to consider expressions of the type
        \[\E{z_i^k\bar{z_i}^m} = \frac{1}{Z_{N,M,p}}\int_{\C} z_i^k \bar{z_i}^m |z_i|^{2(p-M)/M}e^{-|z_i|^{2/M}} \dd m(z_i).\]
        It is non-zero only if $k=m$, because $|z_i|^{2(p-M)/M}e^{-|z_i|^{2/M}}$ is rotational invariant. Therefore the non-zero contribution we have in this case is when $i = j_1=j_2=j_3$ with
        \begin{align*}
            \E{|z_i|^{6}} 
            &= \frac{1}{Z_{N,M,p}} \int_\C |z_i|^{6 + 2(p-M)/M}e^{-|z_i|^{2/M}} \dd m(z_i) \\
            &= \frac{2\pi}{\pi\cdot1!\cdot M\cdot(p-1)!} \int_{0}^\infty r^{6+1+2(p-M)/M} e^{-r^{2/M}} \dd r \\
            &= \frac{2}{M(p-1)!} \cdot \frac{M}{2}\int_0^\infty u^{3M + p-1}e^{-u} \dd u \\
            &= \frac{\G(3M+p)}{\G(p)}.
        \end{align*}

    \noindent \textbf{Second case: $i \in I_p$ where $|I_p| = 2$, i.e. $I_p =\{p,p+M\}$.} Let $j$ be the second element in $I_p$ different from $i$. Then again we find that other $z_r$'s are independent, thus we also need to consider expressions such as $\E{z_i^k \bar{z_i}^{m}\bar{z_j}^l}$. More generally, we consider
        \[\E{z_i^a \bar{z_i}^b z_j^c \bar{z_j}^d} = \frac{1}{Z_{N,M,k}}\int \int z_i^a \bar{z_i}^b z_j^c \bar{z_j}^d |z_i-z_j|^2 |z_iz_j|^{2(p-M)/M} e^{-|z_i|^{2/M} - |z_j|^{2/M}} \dd m(z_i) \dd m(z_j)\]
        Note that integration is taken with respect to the density of the block $I_p$, given in Theorem~\ref{thm:blocks}. This becomes:
        \[\frac{1}{Z_{N,M,k}}\int \int z_i^a \bar{z_i}^b z_j^c \bar{z_j}^d (|z_i|^2 + |z_j|^2 - z_i\bar{z_j} - \bar{z_i}z_j) |z_iz_j|^{2(p-M)/M} e^{-|z_i|^{2/M} - |z_j|^{2/M}} \dd m(z_i) \dd m(z_j).\]
        Hence, by inspection we get a non-zero contribution in the following cases:
        \begin{itemize}
            \item $a=b$ and $c=d$ (contributions associated to $|z_i|^2$ and $|z_j|^2$)
            \item $a+1=b$ and $c=d+1$ (contribution associated to $z_i\bar{z_j}$)
            \item $a=b+1$ and $c+1=d$ (contribution associated to $\bar{z_i}z_j$)
        \end{itemize}
        This once again due to the rotational invariance of the remaining part. In our context, this boils down to two possibilities for $\E{z_i^k \bar{z_i}^{m}\bar{z_j}^l}$, which we refer to as diagonal and off-diagonal contributions. \\
        
    \noindent \textbf{-- Diagonal contributions;} the case $a=b$, $c=d$ occurs if $m = k$ and $l=0$. This corresponds to
        \begin{align*}
            \E{|z_i|^{6}} 
            &= \frac{1}{Z_{N,M,p}} \int_{z_i}\int_{z_j} |z_i|^{6}(|z_i|^ 2+|z_j|^2)|z_iz_j|^{2(p-M)/M}e^{-|z_i|^{2/M} - |z_j|^{2/M}} \dd m(z_i) \dd m(z_j) \\
            &= \frac{1}{Z_{N,M,p}} \int_{z_i}\int_{z_j} |z_i|^{6+2+2(p-M)/M}|z_j|^{2(p-M)/M}e^{-|z_i|^{2/M} - |z_j|^{2/M}} \dd m(z_i) \dd m(z_j) \\
            &+ \frac{1}{Z_{N,M,p}} \int_{z_i}\int_{z_j} |z_i|^{6+2(p-M)/M}|z_j|^{2+2(p-M)/M}e^{-|z_i|^{2/M} - |z_j|^{2/M}} \dd m(z_i) \dd m(z_j) \\
            &= \frac{\pi^2M^2}{Z_{N,M,p}}\left[\G(4M+p)\G(p) + \G(3M+p)\G(M+p)\right] \\
            &= \frac{1}{2}\left[\frac{\G(3M+p+M)}{\G(p+M)} + \frac{\G(3M+p)}{\G(p)}\right],
        \end{align*}
        which is also the same as $\E{|z_j|^{6}}$. \\

    \noindent \textbf{-- Off-diagonal contributions;} the $a=b+1$ and $c+1=d$ occurs if $m = k-1$ and $l=1$. This can only correspond to
        \begin{align*}
            \E{|z_i|^{4} z_i \bar{z_j}}
            &= \frac{-1}{Z_{N,M,p}}\int_{z_i}\int_{z_j} |z_i|^{4} z_i \bar{z_j}\cdot(\bar{z_i}z_j) \cdot|z_iz_j|^{2(p-M)/M} e^{-|z_i|^{2/M}-|z_j|^{2/M}} \dd m(z_i) \dd m(z_j)\\
            &= \frac{-1}{Z_{N,M,p}}\int_{z_i}\int_{z_j} |z_i|^{4+2(k-M)/M} |z_j|^{2+2(p-M)/M} e^{-|z_i|^{2/M}-|z_j|^{2/M}} \dd m(z_i) \dd m(z_j) \\
            &= \frac{-1}{Z_{N,M,p}}\int_{z_i}|z_i|^{4+2(p-M)/M} e^{-|z_i|^{2/M}}\dd m(z_i) \int_{z_j}  |z_j|^{2+2(p-M)/M} e^{-|z_j|^{2/M}}  \dd m(z_j) \\
            &= \frac{-\pi^2M^2}{\pi^2\cdot2!\cdot M^2 \G(p)\G(p+M)}\cdot\G(3M+p)\G(M+p) \\
            &= -\frac{1}{2}\cdot\frac{\G(3M+p)}{\G(p)}
        \end{align*}

        To finish the calculation for the second case, we also need to count the number of occurrences of the expression $\E{|z_i|^{4} z_i \bar{z_j}}$ can be found in the sum of $\E{z_i^3 \overline{z_{j_1} z_{j_2} z_{j_3}}}$ over all $i, j_1, j_2, j_3$. We notice that from $\{j_1,j_2,j_3\}$ two of them must equal to $i$ and the other to $j$. Thus we have $3$ occurrences of a term of type $\E{|z_i|^{4} z_i \bar{z_j}}$. Moreover observe that $p$ will range from $1$ to $N-M$, as there are $N-M$ sets $I_p$ of size $2$. Hence, combining both cases we achieve:
        \begin{align*}
        T_{3,M}(N)
        &= \sum_{\substack{i\in I_p\\ |I_p| = 1}} \E{|z_i|^{6}} + \sum_{\substack{i\in I_p \\ |I_p| = 2}} \E{|z_i|^6} - 3\sum_{p=1}^{N-M}\frac{\G(3M+p)}{\G(p)} \\
        &= \sum_{p=1}^N\frac{\G(3M+p)}{\G(p)} - 3\sum_{p=1}^{N-M}\frac{\G(3M+p)}{\G(p)}\\
        &= \frac{1}{3M+1}\Fp{3M+1}{N} - \frac{3}{3M+1} \Fp{3M+1}{N-M}, & \text{(by Lemma \ref{lem:rising-sum})}
        \end{align*}
        which was the claim.
    \end{proof}
    Now we can attain the following:
    \begin{proposition}
        Let $M$ be a positive integer. Then we have the following
        \[T_{3,M}(N) = \frac{1}{3M+1}\left[\Fp{3M+1}{N} -3\Fp{3M+1}{N-M}+ 3\Fp{3M+1}{N-2M} -\Fp{3M+1}{N-3M}\right]\]
        for every integer $N$.
    \end{proposition}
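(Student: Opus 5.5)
We identify $T_{3,M}$ by interpolation, exactly as for $k=1,2$. Write
\[P(N) = \frac{1}{3M+1}\left[\Fp{3M+1}{N} -3\Fp{3M+1}{N-M}+ 3\Fp{3M+1}{N-2M} -\Fp{3M+1}{N-3M}\right].\]
This is a polynomial in $N$ of degree at most $3M+1$. By Proposition \ref{prop:TkM}, $T_{3,M}$ has degree $3M-2$. It therefore suffices to check that $P$ and $T_{3,M}$ agree at $3M+2$ distinct integers. The natural range is $-2M \leq N \leq 2M$, which gives $4M+1 \geq 3M+2$ points.

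\textbf{Agreement on $[\![0,2M]\!]$.} The key observation is that $\Fp{n}{X}$ vanishes at $X\in\{0,-1,\ldots,-(n-1)\}$. Here $n=3M+1$, so $\Fp{3M+1}{N-jM}=0$ whenever $0\le jM-N\le 3M$.
\begin{itemize}
\item For $0\le N\le M$, the three shifted terms with $j=1,2,3$ vanish, leaving $\frac{1}{3M+1}\Fp{3M+1}{N}$. This matches Proposition \ref{prop:kM_1M}.
\item For $M< N\le 2M$, only the terms with $j=2,3$ vanish, since $N-2M\in[-M,0]$ and $N-3M\in[-2M,-M)$. What remains is exactly the formula of Proposition \ref{prop:M2M}.
\end{itemize}

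\textbf{Agreement on $[\![-2M,-1]\!]$.} For $k=3$ the degree $3M-2$ has the parity of $3M$, so Proposition \ref{prop:TkM} gives $T_{3,M}(-N)=(-1)^{3M}T_{3,M}(N)=(-1)^M T_{3,M}(N)$. I then need to show that $P$ has the same symmetry, $P(-N)=(-1)^{M}P(N)$. Using $\Fp{n}{-X}=(-1)^n\Fp{n}{X-n+1}$ with $n=3M+1$, we get
\[\Fp{3M+1}{-N+jM}=(-1)^{3M+1}\Fp{3M+1}{N-(3-j)M}.\]
So the substitution $N\mapsto -N$ sends the $j$-th term to the $(3-j)$-th term. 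Since the coefficients $1,-3,3,-1$ are antisymmetric under $j\mapsto 3-j$, this yields
\[P(-N)=(-1)^{3M+1}\cdot(-1)\,P(N)=(-1)^{M}P(N).\]
Hence agreement on $[\![1,2M]\!]$ transfers to $[\![-2M,-1]\!]$.

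The polynomial $P-T_{3,M}$ has degree at most $3M+1$ and vanishes at $4M+1$ points. Since $4M+1\ge 3M+2$, it is identically zero, which proves the claim. As a consistency check, the binomial alternating sum of shifts is a third finite difference, so the degree of $P$ drops to $3M-2$, matching $\deg T_{3,M}$. This is not needed for the argument.

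The main obstacle is the parity bookkeeping in the reflection step: getting the sign $(-1)^{3M+1}$ and the index reversal $j\mapsto 3-j$ right. An alternative is to mimic the $k=2$ proof: interpolate only on $[\![-2M,2M]\!]$ and determine the remaining multiple of $\Fp{4M+1}{N-2M}$ by comparing leading coefficients. The symmetry argument avoids that computation.
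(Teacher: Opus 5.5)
Your proof is correct, but it is organized differently from the paper's. The paper uses only the $3M+1$ nodes $\{-M,\dots,2M\}$. The values on $[\![-M,M]\!]$ come from Proposition~\ref{prop:kM_-MM} (i.e.\ the parity of $T_{3,M}$ applied to $[\![0,M]\!]$), and those on $(M,2M]$ come from Proposition~\ref{prop:M2M}. The paper then observes that $\frac{1}{3M+1}\bigl[\Fp{3M+1}{N}-3\Fp{3M+1}{N-M}-\Fp{3M+1}{N-3M}\bigr]$ matches $T_{3,M}$ on these nodes. So the difference is $C\,\Fp{3M+1}{N-2M}$, and $C=-3/(3M+1)$ is read off from the leading coefficient. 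The coefficient $+3$ is thus produced rather than checked.

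This is what your closing ``alternative'' is reaching for, but you stated it with the data of the $k=4$ case. For $k=3$, on $[\![-2M,2M]\!]$ there is no leftover multiple of $\Fp{4M+1}{N-2M}$ to determine.

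You instead start from the candidate $P$ and prove $P(-N)=(-1)^MP(N)$ directly, using $\Fp{n}{-X}=(-1)^n\Fp{n}{X-n+1}$ and the antisymmetry of $1,-3,3,-1$ under $j\mapsto 3-j$. This extends the agreement to $[\![-2M,2M]\!]$ and makes the leading-coefficient step unnecessary. Your route needs the answer in advance, but it is a cleaner verification. It also exposes the structural reason behind the paper's ``symmetry trick''. For any $k$, the identity $(-1)^{k-j}\binom{k}{k-j}=(-1)^k(-1)^j\binom{k}{j}$ gives $P_k(-N)=(-1)^{k(M-1)+1}P_k(N)$ for the candidate $P_k$ of Theorem~\ref{thm_bipartite_HZ_gen_fun}. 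This is exactly the parity of $T_{k,M}$ in Proposition~\ref{prop:TkM}.

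Two slips to fix:
\begin{itemize}
\item The $j$-th term of $P(-N)$ is $\Fp{3M+1}{-N-jM}$, so your display should read $\Fp{3M+1}{-N-jM}=(-1)^{3M+1}\Fp{3M+1}{N-(3-j)M}$. With $-N+jM$ as written, it is false for $j\neq 0$.
\item For $M<N\le 2M$ one has $N-3M\in(-2M,-M]$, not $[-2M,-M)$. This is harmless, since the value still lies in $[-3M,0]$.
\end{itemize}
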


    \begin{proof}
        From proposition \ref{prop:kM_-MM} we have
        \[\T{3}{M}{N} = \frac{1}{3M+1}\left[\Fp{3M+1}{N} - \Fm{3M+1}{N}\right] ~~ \text{for} ~ -M \leq N \leq M.\]
        By proposition \ref{prop:M2M} we get
        \[T_{3,M}(N) = \frac{1}{3M+1}\left[\Fp{3M+1}{N} - 3\Fp{3M+1}{N-M}\right] ~~ \text{for} ~~ M < N \leq 2M.\]
        
        Therefore by considering the polynomial
        \[\frac{1}{3M+1}\left[\Fp{3M+1}{N} -3\Fp{3M+1}{N-M}-\Fp{3M+1}{N-3M}\right] - T_{3,M}(N)\]
        we get $3M+1$ zeros: $\{-M,-M+1\ldots,2M\}$. As this polynomial is of degree $3M+1$ we explicitly write it as
        \[C(N+M)(N+M-1)\ldots(N-2M) = C \cdot \Fp{3M+1}{N-2M}.\]
        To determine the constant $C$, we note that $\deg(T_{3,M}) = 3M-2$, thus by inspection of the leading coefficient (of degree ${3M+1}$), we find $C = -3/(3M+1)$.
    \end{proof}

    Yet again, we are able to specialize the above proposition to calculate exactly the values of $T_{k,M}$ in the integer interval $[\![M,2M]\!]$, for any value of $k$.

    \begin{proposition}\label{prop:kM_M2M}
        For any $k \geq 1$ and $M < N \leq 2M$ we have the following equality
        \[T_{k,M}(N) = \frac{1}{kM+1}\Fp{kM+1}{N} - \frac{k}{kM+1} \Fp{kM+1}{N-M}\]
    \end{proposition}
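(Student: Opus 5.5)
We follow the proof of Proposition \ref{prop:M2M}, replacing the exponent $3$ by a general $k$. For $M < N \leq 2M$, Theorem \ref{thm:blocks} gives blocks $I_p$ with $|I_p|\in\{1,2\}$: the blocks $I_p=\{p,p+M\}$ for $1\le p\le N-M$ have size two, and $I_p=\{p\}$ for $N-M<p\le M$ has size one. We write
\[T_{k,M}(N)=\E{\sum_{i,j_1,\ldots,j_k} z_i^k\,\overline{z_{j_1}}\cdots\overline{z_{j_k}}}\]
and use independence across blocks together with rotational invariance of each block density. A term is nonzero only if every $j_r$ lies in the block of $i$ and the total phase vanishes.

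\textbf{Singleton blocks.} If $I_p=\{p\}$, only $j_1=\cdots=j_k=i$ contributes. The radial computation of Proposition \ref{prop:M2M} then gives $\E{|z_i|^{2k}}=\G(kM+p)/\G(p)$.

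\textbf{Pair blocks.} If $I_p=\{i,j\}$, we need $\E{z_i^a\bar z_i^b z_j^c\bar z_j^d}$ with $a=k$, $c=0$ and $b+d=k$. Expanding $|z_i-z_j|^2=|z_i|^2+|z_j|^2-z_i\bar z_j-\bar z_iz_j$, rotational invariance allows only two cases.
\begin{itemize}
\item $d=0$ (diagonal). Here the block integral gives
\[\E{|z_i|^{2k}}=\tfrac12\left[\frac{\G(kM+p+M)}{\G(p+M)}+\frac{\G(kM+p)}{\G(p)}\right].\]
Since $i$ ranges over both $p$ and $p+M$, the two diagonal terms of the pair sum to $\G(kM+p)/\G(p)+\G(kM+p+M)/\G(p+M)$. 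These are exactly the singleton-type terms indexed by $p$ and $p+M$.
\item $d=1$, $b=k-1$ (off-diagonal, from the $-\bar z_iz_j$ term). Here
\[\E{|z_i|^{2(k-1)}z_i\bar z_j}=-\frac{\pi^2M^2}{Z_{N,M,p}}\,\G(kM+p)\G(M+p)=-\tfrac12\,\frac{\G(kM+p)}{\G(p)}.\]
\end{itemize}
We need to check that the Gamma arguments pair up this way for general $k$: the $z_i$ integral carries $|z_i|^{2k}$ and the $z_j$ integral carries $|z_j|^2$. This yields $\G(kM+p)$ and $\G(M+p)$, exactly as in the case $k=3$, and dividing by $Z_{N,M,p}=2\pi^2M^2\G(p)\G(p+M)$ gives the factor $-\tfrac12$.

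\textbf{Multiplicities.} For each ordered pair $(i,j)$ the off-diagonal pattern arises in $k$ ways, namely the choice of which $j_r$ equals $j$. Each pair block has two ordered choices of $(i,j)$. By symmetry of the block density under $z_i\leftrightarrow z_j$, the off-diagonal value is the same for both orders: the contribution with $i=p+M$ equals the one with $i=p$, since it depends only on the block through $\G(kM+p)\G(M+p)/Z$. So each pair block contributes $2\cdot k\cdot(-\tfrac12)\G(kM+p)/\G(p)=-k\,\G(kM+p)/\G(p)$.

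\textbf{Summation.} Summing over all blocks gives
\[T_{k,M}(N)=\sum_{p=1}^N\frac{\G(kM+p)}{\G(p)}-k\sum_{p=1}^{N-M}\frac{\G(kM+p)}{\G(p)}.\]
Lemma \ref{lem:rising-sum} with $n=kM+1$ turns each sum into a rising factorial, giving the claimed formula.

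The main obstacle is the off-diagonal bookkeeping: we must confirm that no other phase-balanced patterns appear for larger $k$. Since $z_i^k$ forces the total $\bar z$-degree to be $k$, and the $|z_i-z_j|^2$ factor can shift phase by at most one unit between $z_i$ and $z_j$, only $d\in\{0,1\}$ can contribute. We must also get the factor $2$ from the two orientations of each pair block right, which we check against the case $k=3$.
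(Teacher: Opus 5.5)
Your proof is correct and follows the argument the paper intends: the paper only says the proof is ``entirely analogous'' to that of Proposition~\ref{prop:M2M}, and you carry out that computation with $3$ replaced by $k$, correctly showing that only the diagonal ($d=0$) and the single off-diagonal ($d=1$) patterns survive. Your explicit count of $2\cdot k\cdot(-\tfrac12)=-k$ per pair block, which includes both orderings of $(i,j)$, usefully spells out a factor the paper's $k=3$ computation leaves implicit.
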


    The proof is entirely analogous to the one of Proposition \ref{prop:M2M}. We do not repeat it for the sake of brevity. Note that it is also a special case of Theorem \ref{thm_bipartite_HZ_gen_fun}, which will be proved in Section \ref{sec_main_result}.

\subsection{Case $k=4$}
    The last case for which we give a specific computation is $k=4$, which means that we consider bipartite pairings between a $4M$-gon and four $M$-gons, as represented on Figure \ref{fig:case_M_4}. Based on the moments computations of the previous case $k=3$, and the same symmetry trick used for case $k=2$, we establish the following identity:
    
    \begin{proposition}
        Let $M$ be a positive integer. Then for every $N$,
        \[T_{4,M}(N) = \frac{1}{4M+1} \sum_{l=0}^4 (-1)^ l \binom{4}{l}\Fp{4M+1}{N-lM}.\]
    \end{proposition}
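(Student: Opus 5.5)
The plan follows the pattern of the case $k=3$: determine $T_{4,M}$ at enough integer points, then identify the polynomial by a degree argument. The target polynomial
\[P(N) = \frac{1}{4M+1} \sum_{l=0}^4 (-1)^l \binom{4}{l}\Fp{4M+1}{N-lM}\]
has degree at most $4M+1$, while $\deg T_{4,M} = 4M-3$. We therefore need to check $P = T_{4,M}$ at $4M+2$ integer points, or at $4M+1$ points together with a leading-coefficient argument.

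\textbf{Known values.} Proposition \ref{prop:kM_-MM} with $k=4$ gives
\[T_{4,M}(N) = \frac{1}{4M+1}\left[\Fp{4M+1}{N} + \Fp{4M+1}{N-4M}\right] \quad \text{for } -M \le N \le M.\]
Proposition \ref{prop:kM_M2M}, whose proof is the $k=3$ computation with exponent $k$ in place of $3$, gives
\[T_{4,M}(N) = \frac{1}{4M+1}\left[\Fp{4M+1}{N} - 4\Fp{4M+1}{N-M}\right] \quad \text{for } M < N \le 2M.\]

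\textbf{Symmetry.} Next I would apply the parity trick from $k=2$. Since $k(M-1)+1 = 4M-3$ is odd, $T_{4,M}$ is odd by Proposition \ref{prop:TkM}. So for $-2M \le N < -M$,
\[T_{4,M}(N) = -T_{4,M}(-N) = -\frac{1}{4M+1}\left[\Fp{4M+1}{-N} - 4\Fp{4M+1}{-N-M}\right].\]
Using $\Fp{n}{-X} = (-1)^n\Fm{n}{X} = (-1)^n\Fp{n}{X-n+1}$ with $n = 4M+1$, this becomes
\[T_{4,M}(N) = \frac{1}{4M+1}\left[\Fp{4M+1}{N-4M} - 4\Fp{4M+1}{N-3M}\right].\]
This gives $T_{4,M}$ on the whole range $[\![-2M,2M]\!]$, which is $4M+1$ points.

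\textbf{Reconciling the three formulas.} The key observation is that on each subinterval the terms of $P$ that are missing from the local formula vanish. The factor $\Fp{4M+1}{N-lM}$ vanishes exactly for $N \in [\![(l-4)M, lM]\!]$.
\begin{itemize}
\item On $[\![-M,M]\!]$, the missing terms are $l=1,2,3$, with zero sets $[\![-3M,M]\!]$, $[\![-2M,2M]\!]$ and $[\![-M,3M]\!]$. All three contain $[\![-M,M]\!]$.
\item On $[\![M+1,2M]\!]$, the missing terms are $l=2,3,4$, with zero sets containing $[\![-2M,2M]\!]$, $[\![-M,3M]\!]$ and $[\![0,4M]\!]$. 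All three contain $[\![M+1,2M]\!]$.
\item On $[\![-2M,-M-1]\!]$, the missing terms are $l=0,1,2$, with zero sets $[\![-4M,0]\!]$, $[\![-3M,M]\!]$ and $[\![-2M,2M]\!]$. All three contain $[\![-2M,-M-1]\!]$.
\end{itemize}
Hence $P = T_{4,M}$ on all of $[\![-2M,2M]\!]$.

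\textbf{Conclusion.} The difference $P - T_{4,M}$ has degree at most $4M+1$ and vanishes at these $4M+1$ points. It is therefore $C \cdot \Fp{4M+1}{N-2M}$ for some constant $C$. To show $C=0$, I would compare the coefficient of $N^{4M+1}$. Each shifted factorial $\Fp{4M+1}{N-lM}$ is monic, so the top coefficient of $P$ is proportional to $\sum_{l=0}^{4} (-1)^l\binom{4}{l} = 0$. Since $T_{4,M}$ has degree $4M-3 < 4M+1$, the coefficient of $N^{4M+1}$ in the difference is $0$, so $C=0$.

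The main obstacle is the middle-range input, Proposition \ref{prop:kM_M2M} for $k=4$. It requires redoing the two-element-block computation of Theorem \ref{thm:blocks} with $z_i^4\bar z_{j_1}\cdots\bar z_{j_4}$. The off-diagonal term $\E{|z_i|^{6} z_i\bar z_j}$ should again equal $-\tfrac12 \G(4M+p)/\G(p)$, and it occurs $\binom41 = 4$ times for each $i$. Summing over both elements of each two-element block then gives the coefficient $-4$. Everything after that is interpolation bookkeeping.
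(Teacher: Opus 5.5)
Your proof is correct and follows the paper's argument: it uses the same three inputs (Proposition \ref{prop:kM_-MM}, Proposition \ref{prop:kM_M2M}, and the oddness of $T_{4,M}$ to reach $[\![-2M,-M-1]\!]$), interpolates at the $4M+1$ points of $[\![-2M,2M]\!]$, and finishes with a leading-coefficient comparison. The only difference is bookkeeping. The paper interpolates with the four-term polynomial that omits $l=2$ and recovers the $6\Fp{4M+1}{N-2M}$ term as the multiple $C\cdot\Fp{4M+1}{N-2M}$; you instead check the full five-term sum directly, since the extra terms vanish on each subinterval, and get $C=0$ from $\sum_{l}(-1)^l\binom{4}{l}=0$, which is slightly cleaner.
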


        \begin{figure}[h]
        \centering
        \includegraphics[width=9cm]{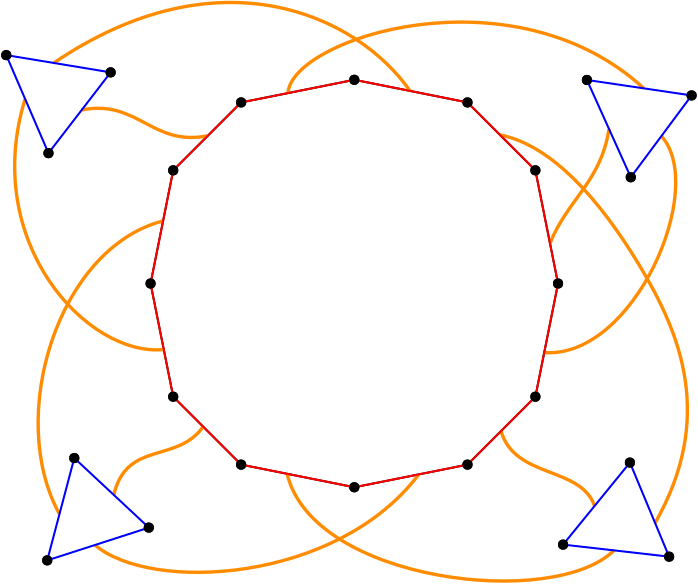}
        \caption{Example of a bipartite pairing of the sides of a 12-gon and  four triangles i.e. $k=4$, $M=3$.}
        \label{fig:case_M_4}
    \end{figure}
    
    \begin{proof}
        Using Proposition \ref{prop:kM_-MM} we get:
        \[\T{4}{M}{N} = \frac{1}{4M+1}\left[\Fp{4M+1}{N} + \Fm{4M+1}{N}\right] ~~ \text{for} ~ -M \leq N \leq M\]
        and by Proposition \ref{prop:kM_M2M}
        \[T_{4,M}(N) = \frac{1}{4M+1}\left[\Fp{4M+1}{N} - 4\Fp{4M+1}{N-M}\right] ~~ \text{for} ~~ M < N \leq 2M.\]
        By the same symmetry trick we get for $-2M \leq N < -M$:
        \[-T_{4,M}(N) = T_{4,M}(-N) = \frac{-1}{4M+1}\left[\Fm{4M+1}{N} - 4\Fm{4M+1}{N+M}\right],\]
        thus
        \[T_{4,M}(N) = \frac{1}{4M+1}\left[\Fp{4M+1}{N-4M} - 4\Fp{4M+1}{N-3M}\right].\]
        Therefore by considering the polynomial
        \[\frac{1}{4M+1}\left[\Fp{4M+1}{N} -4\Fp{4M+1}{N-M}-4\Fp{4M+1}{N-3M} + \Fp{4M+1}{N-4M}\right] - T_{4,M}(N),\]
        we get its $4M+1$ zeros: $\{-2M,\ldots,2M\}$, as it has degree $4M+1$ we conclude that it has the form
        \[C(N-2M)(N-2M+1)\cdots(N+2M) = C F_{4M+1}^+ (N-2M).\]
        By inspection of the leading coefficient, we obtain $C= \frac{6}{4M+1}$ and conclude.
    \end{proof}

    This ends the verification of the first cases: we have so far verified the validity of our main result (Theorem \ref{thm_bipartite_HZ_gen_fun}) for $k=1,2,3,4$ by \textit{ad hoc} computations, identifying the polynomial $T_{k,M}$ on specific integer intervals. In the next section, we establish the general result for every $k \geq 1$, by computing $T_{k,M}(N)$ at some other specific values.

\section{Bipartite formula: the general case}\label{sec_main_result}
\subsection{The proof of the formula} Let us now restate our main theorem and provide a proof of it.

    \begin{theorem}[Bipartite generating function]
        Let $k$ and $M$ be positive integers. Then, for every $N \geq 0$,
        \[T_{k,M}(N) = \frac{1}{kM+1} \sum_{l=0}^k (-1)^l\binom{k}{l}\Fp{kM+1}{N-lM}.\]
    \end{theorem}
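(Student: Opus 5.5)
Write $P_k(N) := \frac{1}{kM+1}\sum_{l=0}^k (-1)^l\binom{k}{l}\Fp{kM+1}{N-lM}$ for the right-hand side. The plan is to show directly that $T_{k,M}(N)=P_k(N)$ for every integer $N\ge 0$. Since both sides are polynomials (Proposition \ref{prop:TkM}), the identity then extends to all $N$. Checking $N\ge 0$ directly avoids the interpolation-plus-leading-coefficient step used for $k\le 4$, which becomes cumbersome for large $k$.

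The first step is to compute $T_{k,M}(N)$ from Theorem \ref{thm:blocks}. Write
\[T_{k,M}(N)=\E{\sum_{i,j_1,\dots,j_k} z_i^k\,\bar z_{j_1}\cdots\bar z_{j_k}}.\]
Blocks are independent, and each block density is rotation invariant, so only index tuples with all $j$'s in the block of $i$ contribute. Hence
\[T_{k,M}(N)=\sum_{p=1}^M \E{\Big(\sum_{i\in I_p} z_i^k\Big)\Big(\overline{\sum_{j\in I_p} z_j}\Big)^k}.\]
Each block density has the form $\prod|z_i-z_j|^2\prod w(z_i)$ with a radial weight $w$, so it is a determinantal process. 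Its orthogonal polynomials are the monomials $z^m$, $m=0,\dots,c_p-1$, with norms $h_m=\pi M\,\Gamma(mM+p)$. The elements of $I_p$ are $p+mM$, which matches Kostlan's radii $\gamma_{p+mM}$.

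The second step is to use the identity $\sum_{i\in I_p} z_i=\tr(\Lambda)$ for the block and view the quantity as $\E{p_k(z)\,\overline{p_1(z)^k}}$ in the determinantal ensemble with monomial orthogonal polynomials. Expand in Schur functions: $p_1^k=\sum_\lambda f^\lambda s_\lambda$ and $p_k=\sum_{\lambda\text{ hook}}(-1)^{\mathrm{ht}}s_\lambda$. For a rotation-invariant ensemble of this type the Schur functions are orthogonal, with
\[\E{s_\lambda\bar s_\mu}=\delta_{\lambda\mu}\prod_{m}\frac{h_{\lambda_m+c-m}}{h_{c-m}}.\]
This is the standard Andréief/Cauchy–Binet computation, since $\det(z_i^{\lambda_m+c-m})$ times the Vandermonde conjugate integrates to a product of moments. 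So only hooks $\lambda=(k-r,1^r)$ contribute, with multiplicity $f^\lambda=\binom{k-1}{r}$ and sign $(-1)^r$. The hook's product of Gamma ratios telescopes to $\Gamma((k-r)M+p+(c-1)M)\,\Gamma(p+(c-1-r)M)\big/\bigl(\Gamma(p+(c-1)M)\,\Gamma(p+(c-1-r-\dots))\bigr)$. More precisely, after simplification it becomes a ratio of the form $\frac{\Gamma(kM+a)}{\Gamma(a)}$ with $a=p+(c_p-1-r)M$, provided $r\le c_p-1$, and it vanishes otherwise. This is exactly the pattern already seen for $k=3$: the diagonal term gives $\Gamma(3M+p)/\Gamma(p)$ type terms, and the off-diagonal term gives $-\tfrac12$-weighted shifts. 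Summing over $p$ and over $c_p$, the terms with index $a=p+sM$ run over all $a\le N-rM$. Therefore
\[T_{k,M}(N)=\sum_{r=0}^{k-1}(-1)^r\binom{k-1}{r}\sum_{a=1}^{N-rM}\frac{\Gamma(kM+a)}{\Gamma(a)},\]
where empty sums are zero. This formula reproduces Propositions \ref{prop:kM_1M} and \ref{prop:kM_M2M}.

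The final step is algebraic. By Lemma \ref{lem:rising-sum}, each inner sum equals $\frac{1}{kM+1}\Fp{kM+1}{N-rM}$, which also vanishes correctly for $0\ge N-rM\ge -kM$. The result is then $\frac{1}{kM+1}\sum_r(-1)^r\binom{k-1}{r}\Fp{kM+1}{N-rM}$. This differs from $P_k$ by a telescoping rearrangement: Pascal's rule $\binom{k}{l}=\binom{k-1}{l}+\binom{k-1}{l-1}$ leaves an extra $\frac{1}{kM+1}\sum_r(-1)^{r+1}\binom{k-1}{r}\Fp{kM+1}{N-(r+1)M}$. Every term there vanishes for $0\le N\le kM$, because each shifted argument lies in $[-kM,0]$ and $F^+_{kM+1}$ has its zeros at $0,-1,\dots,-kM$. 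Hence $T_{k,M}=P_k$ on $kM+1$ consecutive integers, and for $N>kM$ the missing terms must be tracked through the Schur computation with the exact hook weights. Because the degree of $T_{k,M}$ is $k(M-1)+1\le kM$, agreement on $\{0,\dots,kM\}$ already determines the polynomial, provided $P_k$ also has degree at most $kM$. That holds because the $k$-th finite difference in steps of $M$ kills the top $k$ degrees.

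The main obstacle is the Schur-orthogonality evaluation for a block of size $c_p$, and in particular getting the hook product and its sign exactly right, including the constraint $r\le c_p-1$. I would first verify it against the $k=3$ two-point computation in Proposition \ref{prop:M2M}.
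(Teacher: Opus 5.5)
Your per-block computation is correct. By Murnaghan--Nakayama and Schur orthogonality (Andr\'eief), block $p$ contributes
\[\sum_{r=0}^{\min(k,c_p)-1}(-1)^r\binom{k-1}{r}\frac{\Gamma(kM+a_{p,r})}{\Gamma(a_{p,r})},\qquad a_{p,r}=p+(c_p-1-r)M.\]
That is \emph{one} term per admissible hook. It agrees with the paper's block contribution after one application of Pascal's rule.

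The error is in the aggregation over blocks. Note that $p+(c_p-1)M=\max I_p$. As $p$ runs over $1,\dots,M$, these maxima are the largest indices $N-M+1,\dots,N$ (only the positive ones). So for fixed $r$, the index $a_{p,r}$ runs over the window $\{N-(r+1)M+1,\dots,N-rM\}\cap\{1,2,\dots\}$, \emph{not} over all $1\le a\le N-rM$. Your displayed formula for $T_{k,M}(N)$ is therefore false, in two ways:
\begin{itemize}
\item For $k=1$ it gives $\frac{1}{M+1}\Fp{M+1}{N}$ for all $N\ge0$. That polynomial has degree $M+1>\deg T_{1,M}$.
\item For $M<N\le 2M$ it puts coefficient $-(k-1)$ instead of $-k$ in front of $\Fp{kM+1}{N-M}$. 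This contradicts Proposition \ref{prop:kM_M2M}, so it does not reproduce that proposition as you claim.
\end{itemize}
The patch you then use is also false. The leftover terms $\Fp{kM+1}{N-(r+1)M}$ vanish only for $0\le N\le M$; for example, $N=2M$, $r=0$ gives $\Fp{kM+1}{M}\neq0$. So agreement of $T_{k,M}$ with $P_k$ on $\{0,\dots,kM\}$ is not established, and your final step has nothing to stand on.

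The fix is short, and it makes interpolation unnecessary. Set $\Phi(n)=\sum_{a=1}^{n}\Gamma(kM+a)/\Gamma(a)$ for $n\ge0$ and $\Phi(n)=0$ for $n\le 0$. By Lemma \ref{lem:rising-sum}, $\Phi(n)=\frac{1}{kM+1}\Fp{kM+1}{n}$ for every $n\ge -kM$. The window sum for a given $r$ equals $\Phi(N-rM)-\Phi(N-(r+1)M)$. For $N\ge0$ and $r\le k-1$ we have $N-(r+1)M\ge -kM$, so for every $N\ge0$
\[T_{k,M}(N)=\frac{1}{kM+1}\sum_{r=0}^{k-1}(-1)^r\binom{k-1}{r}\Bigl[\Fp{kM+1}{N-rM}-\Fp{kM+1}{N-(r+1)M}\Bigr],\]
and Pascal's rule turns this into $P_k(N)$.

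Once corrected, your route is a genuine alternative to the paper's:
\begin{itemize}
\item The paper restricts to $N=qM$ with $q>k$, so that all blocks have equal size. It expands $|\Delta|^2$ over pairs $(\sigma,\tau)$, analyses the single nontrivial cycle of $\tau\circ\sigma^{-1}$, collapses the signs via a Stirling identity, and concludes from infinitely many values of $N$.
\item Your Schur-function argument replaces that combinatorics by Murnaghan--Nakayama plus orthogonality. It treats blocks of unequal sizes uniformly and yields the identity at every $N\ge0$ directly.
\end{itemize}
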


    \begin{proof}
        We first note that both sides of the equality are polynomials in $N$, therefore it is enough to prove for $N = qM$ for $q > k$ -- which is an infinite set of values on which the block decomposition from Theorem \ref{thm:blocks}  will be particularly tractable. Let us recall this decomposition: we have that
        \[\{\l_1^M,\ldots,\l_N^M\} \stackrel{d}{=} \{z_1,\ldots,z_N\},\]
        with $M$ independent blocks of size $q$ for parameters $p \in [\![1,M]\!]$ with joint density
        \[\frac{1}{Z_{N,M,p}} \prod_{1\leq i < j \leq q} |z_i-z_j|^2 \prod_{i=1}^q |z_i|^{\frac{2(p-M)}{M}}e^{-|z_i|^{2/M}}\dd m(z_i),\]
        where
        \[Z_{N,M,p} = \pi^{q}q! M^{q} \prod_{j=0}^{q-1} (jM+p-1)!.\]
        We want to calculate, for $N=qM$ specifically,
        \begin{align*}
        T_{k,M}(N) = \E{\tr(G^{kM}) \bar{\tr(G^M)}^k} &= \sum_{i,j_1,\ldots,j_k=1}^n \E{\l_i^{kM} \bar{\l_{j_1}}^M\ldots\bar{\l_{j_k}}^M}\\
        & = \sum_{i,j_1,\ldots,j_k=1}^q \E{z_i^k \bar{z_{j_1}}\bar{z_{j_2}}\cdots\bar{z_{j_k}}}.
        \end{align*}
        We note that, by inspection, non-zero contributions can occur only when all $z_j$ variables are in the same block as $z_i$. Let us compute the contribution for the block with index $p$. We assume that $I_p = \{z_1,\ldots,z_q\}$ (without loss of generality, as all blocks have same size $q$, and by exchangeability the indices do not matter to the computation).
        \[ \sum_{i,j_1,\ldots,j_k=1}^q \E{z_i^k \bar{z_{j_1}}\bar{z_{j_2}}\cdots\bar{z_{j_k}}} = q\cdot \sum_{\a_1 + \ldots + \a_{q} = k} \frac{k!}{\a_1 ! \cdots \a_q!} \E{z_1^k \bar{z_1}^{\a_1}\cdots\bar{z_q}^{\a_q}}\]
        This comes from considering how many times ($\alpha_{\ell}$) the index $\ell$ appears among $j_1, \dots, j_k$. The factor $q$ comes from fixing $i=1$ instead of summing over $i$, and the factor $k!/(\a_1!\cdots\a_q!)$ is the number of ways in which the weights $\alpha_{\ell}$ can be realized. We then write down the expectation corresponding to the block $I_p$, which yields
        \[ \frac{q}{Z_{N,M,p}} \sum_{\a_1+\ldots+\a_q = k}\frac{k!}{\a_1! \cdots \a_q!}\int_{\C^q} z_1^k\bar{z_1}^{\a_1}\cdots\bar{z_q}^{\a_q} \prod_{i<j} |z_i - z_j|^2 \prod_{i=1}^q |z_i|^{\frac{2(p-M)}{M}}e^{-|z_i|^{2/M}}\dd m(z_i).\]
        Observe that $\displaystyle \prod_{i=1}^q |z_i|^{\frac{2(p-M)}{M}}e^{-|z_i|^{2/M}}$ is rotational invariant, hence we focus on the quantity
        \beq\label{eq_sum_permutations} z_1^k\bar{z_1}^{\a_1}\cdots\bar{z_q}^{\a_q} \prod_{i<j} |z_i - z_j|^2
        = \sum_{\s\in S_q} \sum_{\t \in S_q} \sign(\s)\sign(\t) z_1^k\bar{z_1}^{\a_1}\cdots\bar{z_q}^{\a_q} \prod_{i=1}^{q} z_i^{\s(i)-1} \bar{z_i}^{\t(i)-1}.
        \eeq
        When integrating the above expression against a rotationally invariant weight, a non-zero contribution can only occur if the following equalities hold:
        \[
        \begin{cases}
            k + \s(1) = \a_1 + \t(1), \\
            \s(i) = \a_i + \t(i) & \text{for} ~~ i \geq2.
        \end{cases}
        \]
        Note that the only index $i$ for which $\s(i) - \t(i)$ could be negative is for $i=1$, elsewhere it has to be non-negative, as $\a_i \geq 0$. This imposes a strong constraint on the structure of $\s$ and $\t$, which we will exploit; in the following argument, we consider the permutation $\pi = \t \circ \s^{-1}$ and denote $\sigma(1) = a$ and $\tau(1) = a+r$, where $r = k-\a_1 \geq 0$.
        \vspace{0.5em}
        
        \noindent \textbf{First case: $r=0$.} We first consider the case when $r = 0$, which forces $\a_1 = k$ and $\a_i = 0$ for all $i\geq2$, as $k = \a_1 + \cdots \a_q$ with $\a_{\ell} \geq 0$ for every $\ell$.
        Thus, for a nonzero contribution we must have $\s = \t$, and we are left with integrating
        \[\frac{q}{Z_{N,M,p}}\sum_{\s \in S_q} |z_1|^{2(k+\s(1)-1)} \prod_{i=2}^q |z_i|^{2(\sigma(i)-1)} \prod_{i=1}^q |z_i|^{\frac{2(p-M)}{M}}e^{-|z_i|^{2/M}}\dd m(z_i).\]
        By a direct change of variable, we have that
        \beq \int_\C |z|^{a+2(p-M)/M}e^{-|z|^{2/M}} \dd m(z) = \pi M\G(aM/2 + p) \eeq
        Substituting the above identity, we obtain the following expression for these contributions:
        \[\frac{qM^q\pi^q}{Z_{N,M,p}} \sum_{\s \in S_q}\frac{\G((k+\s(1)-1)M + p)}{\G((\s(1)-1)M + p)} \prod_{j=0}^{q-1} \G(jM+p) = \frac{1}{(q-1)!} \sum_{\s \in S_q}\frac{\G((k+\s(1)-1)M + p)}{\G((\s(1)-1)M + p)}.\]
        Now we notice that for any $j \in [n]$, we have exactly $(q-1)!$ permutations such that $\s(1) = j$. Therefore the sum of non zero contributions corresponding to $r=0$ simplifies to
        \beq\label{eq_r0_contribution} \sum_{j=0}^{q-1} \frac{\G((j+k)M + p)}{\G(jM+p)}.\eeq

        \noindent \textbf{Second case: $r\geq 1$.} Now assume that $r = k - \a_1 \geq 1$. The key remark is then that for the contribution to be nonzero, $\pi = \t \circ \s^{-1}$ must have a specific structure, namely, it has only one cycle that is not a fixed point. Let us explore the structure of $\pi$ in some detail before carrying on with the computation.
        \[
        \begin{matrix}
        1 & 2 & \ldots & a & \ldots & a+r & \ldots & q-1 & q\\
        1 & 2 & \ldots& a+r & - & - & \ldots & q-1 & q\\
        \end{matrix}
        \]
        Recall that $a = \sigma(1)$. We first notice that, as
        $$
        \forall i \neq 1, \quad \s(i) - \t(i) \geq 0,
        $$
        we must have, by setting $j=\s (i)$ and using the fact that $\s$ is a bijection,
        $$
        \forall j \neq a, \quad  j - \pi(i) \geq 0.
        $$
        As for $j=a$, we have that 
        $$a - \pi(a) = \s (a) - \t (1) = -r  < 0. $$
        These constraints mean that for $\pi$ we are restricted to only one \textit{right} jump at $i = a$ and the rest must be \textit{left} jumps, which is only possible in one cycle as $\pi$ must also be a bijection. By inspection, this cycle must also have $a$ and $a+r$ as extremal indices. More precisely, we must have some sequence $0=\b_0 < \b_1 < \ldots < \b_m < \b_{m+1} = r$, such that
        \[\pi(a+\b_{i+1}) = a + \b_i, \]
        as represented on Figure \ref{fig:perm}. All other indices are fixed points of $\pi$. \\
        
        \begin{figure}[h!]
            \centering
            \includegraphics[width=9cm]{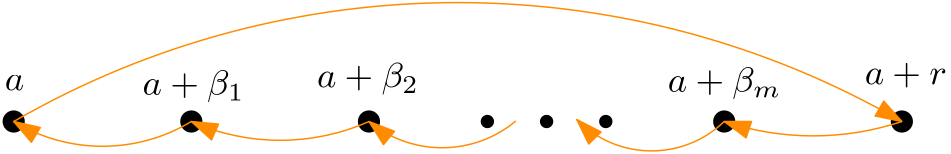}
            \caption{A visualization of the structure of $\pi =  \tau \circ \sigma^{-1}$, with a unique right jump from $a$ to $a+r$ and $m+1$ left jumps from $a+\b_{i+1}$ to $a+\b_i$.}
            \label{fig:perm}
        \end{figure}
        
        \noindent We notice that if we set $j = \sigma^{-1}(a+\b_{i+1})$, then we have 
        \[\s(j) - \t(j) = a + \b_{i+1} - (a+\b_i) = \b_{i+1} - \b_i\]
        So we have that $\a_j$ is either $0$ or some $\b_{i+1}- \b_i$ for some $i$. And 
        \[\a_1 = \s(1) - \t(1) +k = k - r.\]
        With this construction, the datum of a permutation $\sigma$ and of the tuple $(\b_1,\ldots,\b_m)$ uniquely determines $\tau$, as we then have
        \[\t = ( a+r, a+\b_m, \ldots , a+\b_1 , a ) \circ \s.\]
        In particular, note that $\sign(\t) = (-1)^{m+1} \cdot \sign(\s)$. $\sigma$ can be any permutation, with the only requirement that $\s(1) \leq q-r$ to accommodate the jump, and $r$ would range from $1$ to $k$, as this is the largest jump we can make from $\s(1)$. Therefore we can rewrite the sum over permutations \eqref{eq_sum_permutations} as follows, for a given parameter $k \geq r \geq 1$:
        \beq\label{eqn:longperm} 
        \sum_{m=0}^{r-1} \ \sum_{ 0 < \b_1 < \ldots < \b_m < r } \sum_{\substack{\s \in S_q \\ \s(1) \leq q-r}} \frac{k! (-1)^{m+1} }{\prod_{i=0}^m (\b_{i+1} - \b_i)! \cdot (k-r)!} |z_1|^{2(k+\s(1)-1)} \prod_{i=2}^q |z_1|^{2(\s(i)-1)}.
        \eeq
        We first focus on the combinatorial expression
        \[\sum_{0 < \b_1 <\ldots < \b_m < r} \frac{1}{\prod_{i=0}^m (\b_{i+1} - \b_i)!}, \]
        which we simplify in the following way. By rewriting $a_i = \b_{i+1} - \b_i$ with convention $\b_0~=~0$ and $\b_{m+1} = r$, we have $a_0 + \ldots + a_m = r$ and all $a_i \geq 1$. Therefore, we have
        \[ \sum_{0 < \b_1 <\ldots < \b_m < r} \frac{1}{\prod_{i=0}^m (\b_{i+1} - \b_i)!}= \frac{1}{r!}\sum_{\substack{a_0+\ldots+a_m = r \\ a_i \geq 1}} \binom{r}{a_0,\ldots,a_m}.\]
        The sum represent exactly the number of ways to partition $r$ objects into $m+1$ non-empty sets, together with a labeling of these sets from $1$ to $m+1$. Therefore this sum is equal to $(m+1)! \cdot \ssec{r}{m+1}$ by definition of Stirling numbers, and hence
        \[\sum_{0 < \b_1 <\ldots < \b_m < r} \frac{1}{\prod_{i=0}^m (\b_{i+1} - \b_i)!} = \frac{(m+1)!}{r!} \ssec{r}{m+1}.\]
        We now perform the sum over $m$, and find the following quantity
        \[\sum_{m=0}^{r-1} (-1)^{m+1}(m+1)!\ssec{r}{m+1},\]
        which we can simplify by using Lemma \ref{lem:stirling_second_power}; moreover, as $\ssec{r}{0} = 0$, we find that the sum is exactly $(-1)^r$. Hence
        \[\sum_{m=0}^{r-1}(-1)^{m+1}\sum_{0 < \b_1 <\ldots < \b_m < r} \frac{1}{\prod_{i=0}^m (\b_{i+1} - \b_i)!} = \frac{(-1)^r}{r!}.\]
        Therefore, the quantity \eqref{eqn:longperm} is equal to
        \begin{align*}
        \frac{q}{Z_{N,M,p}}\sum_{r=1}^k (-1)^r\binom{k}{r} & \sum_{\substack{\s \in S_q \\ \s(1) \leq q-r}} \int_{\C^q}|z_1|^{2(k+\s(1)-1)} \prod_{i=2}^q |z_1|^{2(\s(i)-1)} \prod_{i=1}^q |z_i|^{\frac{2(p-M)}{M}}e^{-|z_i|^{2/M}}\dd m(z_i)\\
        &=\frac{qM^q\pi^q}{Z_{N,M,p}}\sum_{r=1}^k(-1)^r\binom{k}{r} \sum_{\substack{\s \in S_q \\ \s(1) \leq q-r}}\frac{\G((k+\s(1)-1)M+p)}{\G((\s(1)-1)M + p)} \prod_{j=0}^{q-1}\G(jM+p). \\
        &=\sum_{r=1}^k (-1)^r\binom{k}{r} \sum_{j=0}^{q-r-1}\frac{\G((j+k)M + p)}{\G(jM+p)}.
        \end{align*}
        Therefore, by summing over all possible block indices $p=1,\ldots,M$ and all values of $r=0, \dots, k$ including the contribution \eqref{eq_r0_contribution}, we find the following expression for the value $T_{k,M}(N)$ when $N=qM$:
        \begin{align*}
        \sum_{p=1}^{M} \sum_{r=0}^k (-1)^r \binom{k}{r} &\sum_{j=0}^{q-r-1}\frac{\G((j+k)M + p)}{\G(jM+p)} 
         =  \sum_{r=0}^k (-1)^r \binom{k}{r} \sum_{j=0}^{q-r-1} \sum_{p=1}^{M}\frac{\G((j+k)M + p)}{\G(jM+p)} \\
        &= \frac{1}{kM+1}\sum_{r=0}^k (-1)^r \binom{k}{r} \sum_{j=0}^{q-r-1} F_{kM+1}^+((j+1)M) - F_{kM+1}^+(jM) \\
        &= \frac{1}{kM+1}\sum_{r=0}^k (-1)^r \binom{k}{r} F_{kM+1}^+((q-r)M) \\
        &= \frac{1}{kM+1}\sum_{r=0}^k (-1)^r \binom{k}{r} F_{kM+1}^+(N -rM).
        \end{align*}
        which concludes the proof, as these values fully characterize the polynomial.
    \end{proof}

    \subsection[Recovering the coefficient]{Recovering the coefficient $\e_g(k,M)$}\label{sec:reccoef}
    We noticed that we are able to extract the exact coefficient from the equality
    \[\sum_{g=0}^{\lfloor k(M-1)/2\rfloor} \e_g(k,M) N^{k(M-1)+1-2g} = \frac{1}{kM+1} \sum_{l=0}^k (-1)^l \binom{k}{l} F_{kM+1}^+(N-lM).\]
    We expand the rising factorial in terms of Stirling numbers of the first kind as follows
    \[F_{kM+1}^+(N-lM) = \sum_{p=0}^{kM+1}\sfir{kM+1}{p}(N-lM)^p.\]
    Hence
    \begin{align*}
        \frac{1}{kM+1} \sum_{l=0}^k (-1)^l \binom{k}{l} F_{kM+1}^+ & (N-lM)  
        = \frac{1}{kM+1} \sum_{l=0}^k \sum_{p=0}^{kM+1} (-1)^l \binom{k}{l} \sfir{kM+1}{p}(N-lM)^p \\
        &= \frac{1}{kM+1} \sum_{l=0}^k \sum_{p=0}^{kM+1} (-1)^l \binom{k}{l} \sfir{kM+1}{p} \sum_{r=0}^p \binom{p}{r} N^r (-lM)^{p-r} \\
        &= \frac{1}{kM+1}   \sum_{p=0} ^{kM+1} \sum_{r=0}^p \sfir{kM+1}{p}  \binom{p}{r} N^r (-M)^{p-r} \sum_{l=0}^k(-1)^l \binom{k}{l} l^{p-r}
    \end{align*}
    We use the closed form for the Stirling number of the second kind, Proposition \ref{prop:stirling_second_closed}
    \[ \sum_{l=0}^k(-1)^l\binom{k}{l} l^{p-r} = (-1)^k k!\ssec{p-r}{k}\]
    Hence
    \begin{align*}
        &= \frac{1}{kM+1}   \sum_{p=0} ^{kM+1} \sum_{r=0}^p \sfir{kM+1}{p}  \binom{p}{r} N^r (-M)^{p-r} (-1)^k k!\ssec{p-r}{k} \\
        &= \frac{1}{kM+1}  \sum_{r=0}^{kM+1} \sum_{p=r} ^{kM+1} \sfir{kM+1}{p}  \binom{p}{r} N^r (-M)^{p-r} (-1)^k k!\ssec{p-r}{k} \\
        &= \frac{(-1)^k k!}{kM+1}  \sum_{r=0}^{k(M-1)+1} \left[\sum_{p=k} ^{kM+1-r} \sfir{kM+1}{p+r}  \binom{p+r}{r} \ssec{p}{k} (-M)^{p}\right] N^r
    \end{align*}
    Finally we get
    \[\e_g(k,M) = \frac{(-1)^k k!}{kM+1}\sum_{p=k}^{k+2g} \sfir{kM+1}{p+k(M-1)+1-2g}\binom{p+k(M-1)+1-2g}{p}\ssec{p}{k}(-M)^p\]
    Reindexing the sum, we get:
    \begin{proposition}
        For all integers $k,M>0$, $g \geq 0$, we have the following identity
        \[\e_g(k,M) = \frac{(-1)^k k!}{kM+1}\sum_{p=0}^{2g} \sfir{kM+1}{p+kM+1-2g}\binom{p+kM+1-2g}{p+k}\ssec{p+k}{k}(-M)^{p+k}.\]
    \end{proposition}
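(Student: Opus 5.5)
The statement is a reindexed form of the closed expression derived just above, so the plan is to verify that reindexing. We start from the main generating-function identity (the general-case theorem just proved) together with Proposition \ref{prop:TkM}(i). Expanding each $F_{kM+1}^+(N-lM)$ via unsigned Stirling numbers of the first kind and the binomial theorem gives a triple sum. The inner alternating sum $\sum_{l}(-1)^l\binom{k}{l}l^{p-r}$ is then collapsed to $(-1)^k k!\ssec{p-r}{k}$ using Proposition \ref{prop:stirling_second_closed}. This alternating sum vanishes whenever $p-r<k$, which forces the inner index to start at $k$.

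Next we extract the coefficient of $N^{k(M-1)+1-2g}$. Set $r=k(M-1)+1-2g$ and rename $p-r$ as $p'$, so that $p'\ge k$ and $p'+r\le kM+1$. This gives
\[\e_g(k,M)=\frac{(-1)^k k!}{kM+1}\sum_{p'=k}^{k+2g}\sfir{kM+1}{p'+r}\binom{p'+r}{r}\ssec{p'}{k}(-M)^{p'}.\]
Here the upper limit $k+2g$ comes from $p'+r\le kM+1$. Writing $p'=p+k$ with $0\le p\le 2g$ gives $p'+r=p+kM+1-2g$. For the binomial factor we use symmetry: $\binom{p'+r}{r}=\binom{p'+r}{p'}=\binom{p+kM+1-2g}{p+k}$. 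This is exactly the displayed formula.

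The main point needing care is bookkeeping rather than any conceptual step. First, the powers $N^r$ of the wrong parity or of exponent above $k(M-1)+1$ must have vanishing coefficient; this follows from Proposition \ref{prop:TkM}, since the polynomial identity holds for all $N$. Second, the summation ranges must be justified. The lower bound $p'\ge k$ holds because $\ssec{p'}{k}=0$ for $p'<k$. For the upper bound, terms with $p+kM+1-2g>kM+1$ vanish because the Stirling number of the first kind is zero there. Once those ranges are justified, the result follows by comparing coefficients of the two polynomials in $N$.
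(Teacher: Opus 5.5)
Your proposal is correct and follows essentially the same route as the paper: you expand each $F_{kM+1}^+(N-lM)$ via Stirling numbers of the first kind and the binomial theorem, collapse $\sum_l(-1)^l\binom{k}{l}l^{p-r}$ into $(-1)^k k!\ssec{p-r}{k}$ using Proposition~\ref{prop:stirling_second_closed}, read off the coefficient of $N^{k(M-1)+1-2g}$, and reindex $p'=p+k$ using binomial symmetry. Your remark that the other coefficients must vanish is a consequence of the identity rather than something the coefficient extraction needs, but it does no harm.
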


\noindent In particular, we notice that the leading coefficient: $\e_0(k,M)$ simplifies to 
    \beq\label{Fuss_Catalan}
    \e_0(k,M) = \frac{(-1)^kk!}{kM+1} \sfir{kM+1}{kM+1}\binom{kM+1}{k}\ssec{k}{k}(-M)^{k} = \frac{k! \cdot M^k}{kM+1} \binom{kM+1}{k},
    \eeq
which one can identify as a Fuss-Catalan number, as expected.

\section{Epilogue: recovering the original Harer-Zagier Formula}\label{sec_recovering_HZ}

    In this last section, we explain why our bipartite theorem actually generalizes the original Harer-Zagier formula: indeed, one can recover Theorem~\ref{thm_HZ_gen_fun} directly from Theorem~\ref{thm_bipartite_HZ_gen_fun}. For this, the key is to notice the following link between the original Harer-Zagier coefficients $\e_g(n)$ and the bipartite coefficients $\e_g(k,M)$, at the specific values $M=2$ and $k=n$.

    \begin{proposition}\label{prop:correspondence}
    $\e_g(k,2) = 2^k \cdot k! \cdot \e_g(k).$
    \end{proposition}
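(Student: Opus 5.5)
I would prove Proposition \ref{prop:correspondence} by a direct bijective (counting) argument at the level of pairings, which keeps the genus unchanged; a random-matrix route is possible but the combinatorial one is cleaner. For $M=2$, a bipartite pairing matches the $2k$ sides of the $2k$-gon with the $2k$ sides of $k$ digons $D_1,\dots,D_k$. Each digon, once glued along both of its sides, simply identifies two sides of the $2k$-gon with each other; the digon collapses to a single edge of the surface. Thus every bipartite pairing determines a (non-bipartite) pairing of the sides of the $2k$-gon, i.e.\ an element counted by some $\e_{g'}(k)$. This gives a map $\Phi$ from bipartite pairings (with $k$, $M=2$) to Harer--Zagier pairings of a $2k$-gon.

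The first step is to check that $\Phi$ preserves the genus. I would do this with Euler characteristic. In the bipartite surface there are $k+1$ faces and $2k$ edges; in the image surface there is $1$ face and $k$ edges. Collapsing each digon face together with its two edges into a single edge removes one face and one edge per digon, so $F-E$ is unchanged. The vertex sets coincide, since a digon has just two vertices, which are identified exactly as the endpoints of the resulting edge. The orientation compatibility also needs checking: an orientation-reversing identification of a $2k$-gon side with a digon side, composed with the other such identification through the digon, gives an orientation-reversing identification of the two $2k$-gon sides. This is the orientable gluing convention used for $\e_g(k)$. Hence $V-E+F$ and thus the genus agree; equivalently, this can be phrased as $V = k(M-1)+1-2g = k+1-2g$ for both. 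This step is the main potential obstacle: one must make sure that vertex identifications really coincide, and I would verify it by tracing the cyclic order of corners around each vertex.

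The second step is counting fibres. Given a pairing of the $2k$-gon's sides into $k$ pairs, its preimages under $\Phi$ are obtained as follows. First assign the $k$ pairs to the $k$ labelled digons bijectively, which gives $k!$ choices. Then, for each digon, decide which of its two sides is glued to which side of the assigned pair, which gives $2$ choices per digon and $2^k$ overall. Each choice yields a distinct bipartite pairing, and every bipartite pairing arises this way. So each fibre has size exactly $2^k k!$, and together with genus preservation this gives $\e_g(k,2)=2^k\,k!\,\e_g(k)$.

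As a sanity check I would compare leading terms: \eqref{Fuss_Catalan} gives $\e_0(k,2)=\frac{k!\,2^k}{2k+1}\binom{2k+1}{k}=2^k k!\,\mathrm{Cat}_k$, consistent with $\e_0(k)=\mathrm{Cat}_k$. Alternatively, one could cross-check by Wick calculus: $\E{\tr(G^{2k})\overline{\tr(G^2)}^k}$ versus the GUE moment. The combinatorial argument suffices, however.
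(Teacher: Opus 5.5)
Your proposal is correct and follows essentially the same argument as the paper. In both, each digon collapses to a direct identification of two sides of the $2k$-gon without changing the surface, and each pairing of the $2k$-gon arises from exactly $k!\cdot 2^k$ bipartite pairings (a choice of which digon serves each pair, and of the direction in which it is used); your Euler-characteristic and orientation check just makes explicit the genus-preservation step that the paper asserts directly.
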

    
    \begin{proof}
    By definition, $\e_g(k,2)$ counts bipartite pairings between one $2k$-gon and $k$ separate $2$-gons, as illustrated on Figure~\ref{fig:recover}. Notice, then, that when two edges of the $2k$-gon are joined to the same $2$-gon, the surface obtained is the same as with a direct edge pairing of the two edges of the $2k$-gon, with no $2$-gon involved. Therefore, every bipartite pairing of a $2k$-gon and $k$ distinct $2$-gons can be naturally associated to a pairing of edges of a single $2k$-gon, yielding the same surface -- which is precisely one of the pairings counted by the coefficients $\e_g(k)$. This, however, is not a one-to-one correspondence, as a given pairing of the edges of the $2k$-gon can be obtained in exactly $k! \cdot 2^k$ ways, as for each pairing of two edges, we can choose 1) which $2$-gons we use, out of $k$ possible choices, which gives a factor $k!$ and 2) in which direction it is used, with two possible choices for each $2$-gon, giving the factor $2^k$.
    \end{proof}

    \begin{figure}[ht]
        \centering
        \includegraphics[width=6cm]{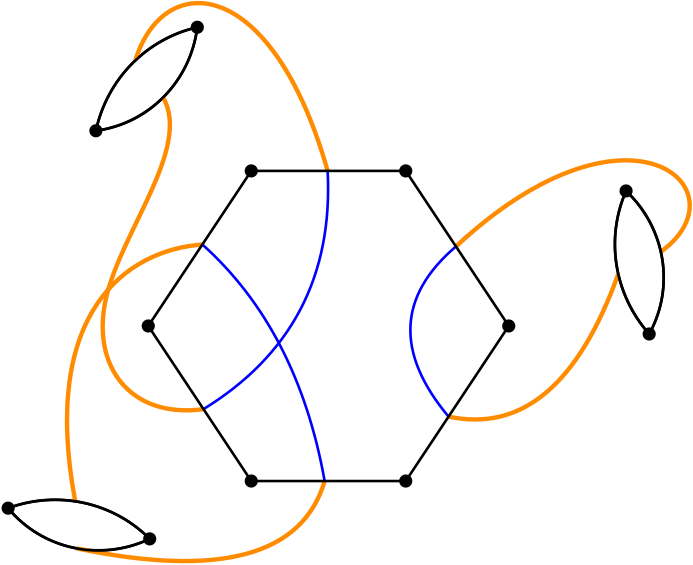}
        \caption{Example of a bipartite pairing of the sides of a hexagon with those of three $2$-gons, i.e. $k=3$, $M=2$, represented in orange. This pairing yields the same surface as a pairing of the edges of the hexagon, represented in blue. This could have been achieved in $3! \cdot 2^3 = 48$ ways.}
        \label{fig:recover}
    \end{figure}

Starting with Theorem~\ref{thm_bipartite_HZ_gen_fun}, the bipartite generating function with $M=2$, $k=n$ is given by the formula:
\beq 
\sum_{g=0}^{\lfloor n/2\rfloor} \e_g(n,2) N^{n+1-2g}
=
\frac{1}{2n+1}\sum_{l=0}^n (-1)^l \binom{n}{l}F_{2n+1}^+(N-2l)
\eeq
Replacing the bipartite coefficients by the Harer-Zagier coeffients according to Proposition~\ref{prop:correspondence}, we find
\beq 
\sum_{g=0}^{\lfloor n/2\rfloor} \e_g(n) N^{n+1-2g}
=
\frac{1}{n! \ 2^n \ (2n+1)}\sum_{l=0}^n (-1)^l \binom{n}{l}F_{2n+1}^+(N-2l)
\eeq
where, by definition,
\beq
F_{2n+1}^+(N-2l) = (2n+1)! \binom{N+2n-2l}{2n+1},
\eeq
so that
\beq 
\sum_{g=0}^{\lfloor n/2\rfloor} \e_g(n) N^{n+1-2g}
=
\frac{(2n)!}{n! \cdot 2^n} \sum_{l=0}^n (-1)^l \binom{n}{l} \binom{N+2n-2l}{2n+1}.
\eeq
This is a closed expression for the Harer-Zagier generating function, but is not yet obviously the same as the one cited in Theorem~\ref{thm_HZ_gen_fun}. We transform this expression using the coefficient extractor operator, denoted by $[x^{m}]$, simply noticing that
    \[\binom{N+2n-2l}{2n+1} = [x^{2n+1}] (1 + x)^{N+2n-2l},\]
and thus
\begin{align*}
\sum_{l=0}^n (-1)^l \binom{n}{l}\binom{N+2n-2l}{2n+1}  & = [x^{2n+1}] \left( (1+x)^N\sum_{l=0}^n (-1)^l \binom{n}{l}(1+x)^{2(n-l)} \right) \\
    & = [x^{2n+1}] \left( (1+x)^N(-1+(1+x)^2)^n \right) \\
    & = [x^{2n+1}] \left( (1+x)^N(2+x)^n x^n \right) \\
    & = [x^{n+1}] \left( (1+x)^N(2+x)^n \right) \\
    & = \sum_{l=0}^n 2^l \binom{N}{l+1} \binom{n}{n-l}.
\end{align*}
We conclude that
\beq 
\sum_{g=0}^{\lfloor n/2\rfloor} \e_g(n) N^{n+1-2g} 
=
(2n-1)!! \ \sum_{l=0}^n 2^l \binom{N}{l+1} \binom{n}{l}
\eeq
which is the same expression as Theorem~\ref{thm_HZ_gen_fun}. Thus, we have recovered the original, non bipartite, Harer-Zagier formula.

\end{document}